\newtheorem{thm}{Theorem}[section]
\newtheorem{lem}[thm]{Lemma}
\newtheorem{prop}[thm]{Proposition}
\def\one{\mbox{1\hspace{-4.25pt}\fontsize{12}{14.4}\selectfont\textrm{1}}} 
\title{\Large\textbf{Dewetting dynamics of anisotropic particles \\
-- a level set numerical approach}}
\author{Siddharth Gavhale  and Karel Svadlenka\\ \small Department of Mathematics, Kyoto University, Japan}
{}
\date{}
\begin{document}
\maketitle

\noindent
{\bf Abstract.} We extend thresholding methods for numerical realization of mean curvature flow on obstacles to the anisotropic setting where interfacial energy depends on the orientation of the interface. This type of schemes treats the interface implicitly, which supports natural implementation of topology changes, such as merging and splitting, and makes the approach attractive for applications in material science. The main tool in the new scheme are convolution kernels developed in previous studies, that approximate the given anisotropy in a nonlocal way. We provide a detailed report on the numerical properties of the proposed algorithm. \\

\vspace{0.05cm}

\noindent
{\bf Keywords}: interface evolution, anisotropic energy, weighted mean curvature, obstacle problem, thresholding method, convolution kernels, topology change, numerical analysis. \\
{\bf Mathematics Subject Classification 2020}: 53E10, 65K10, 74P20

\section{Introduction}
Evolution of solid or liquid particles possessing direction-dependent surface energies on substrate is a phenomenon appearing in several fields of applied science and engineering, such as cell biology or material science \cite{Bao2017,Wang2015, Winterbottom1967,Xu2017,Jiang2012,Campinho2013}.
For example, in coating techniques such as thermal spraying, it is important to predict the dynamics of spreading of impinging particles \cite{Bertagnolli1995}.
Likewise, in manufacturing of nanopatterned substrates, the precise control of the size and location of forming nanoparticles is essential to boost the functionality of the product \cite{Huang2005}.
The above applications concern two opposing types of dynamics: either several small particles merge into a thin film or a thin films splits into particles forming a pattern. It is an ongoing challenge to become able to control these dynamics by tuning physical parameters such as thickness of the film or surface energy of the material \cite{Thompson2012}.

Mathematical models usually express the motion of particle as an interface evolution problem via gradient flow of an energy functional with respect to some metric. When an obstacle is present in the form of substrate, one arrives at a multiphase problem, whence the evolution is also governed by force balance at triple points, i.e., points where three different phases meet. 
For instance, \cite{Wang2015} formulates a surface diffusion problem on obstacle with volume constraint and numerically implements it using explicit front-tracking method.
A large number of numerical studies on  explicit treatment of interface tracking, such as finite differences, marker particle and finite elements, has appeared in recent years and this approach is now well developed. 
It turns out to be very effective when changes of topology are absent from the evolution.
However, when topology changes occur, such as merging and splitting of particles, explicit methods require a manual surgery, which is not based on any mathematical principle and may be prohibitively complicated, especially in higher dimensions. 

It is known that implicit representation of the interface, e.g., as a level set of a function, allows for graceful handling of topology changes. Nevertheless, there are only a few studies on its numerical implementation for motion of particles on substrates. 
We mention here two prominent results that we are aware of: the phase field method for solving a Cahn-Hilliard model introduced by Jiang et al. in \cite{Jiang2012}, and the thresholding approach for mean curvature flow developed by Xu et al. in \cite{Xu2017}.  
Although both of these works confirmed good behavior of the proposed schemes, they address only the isotropic case, where the energy does not depend on orientation of interface.

The purpose of this article is to extend the thresholding method of \cite{Xu2017} to the anisotropic setting, which is motivated by the numerical advantages of the thresholding method: unconditional stability, low computational cost and natural handling of topology changes. In this generalization, we apply the theory of anisotropic convolution kernels developed in a series of papers \cite{Bonnetier2012, Elsey2018, Esedoglu2017b}. Since the numerical properties of these kernels were not yet systematically studied, we first provide a numerical analysis of the two-phase problem, i.e., without obstacle. The main contribution of the paper is the construction and numerical investigation of an algorithm for solving the full problem of a volume-preserving anisotropic particle evolving on substrate, and possibly undergoing topology changes.
We show that although the new scheme retains the good points of thresholding algorithms, each of the studied kernels has certain drawbacks, pointing to new research directions.

\section{The model}
\label{sec_model}

First, let us describe the setup of the problem.
We consider a particle $P$ on a rigid substrate $S$ surrounded by a vapor region $V$, where $P,S,V$ are taken as closed sets (see Figure \ref{fig:setup}).
Since we are interested solely in the evolution of the particle, it does not make any difference if we frame our system in a bounded domain $\Omega \subset \mathbb{R}^d$ that is large enough so that the particle does not touch its boundary during the evolution. 
We deal mainly with the case $d=2$ and only briefly comment on the case of general $d$. 
For later use we denote $\Omega^{\text{up}} := P \cup V=\overline{\Omega\setminus S}$ and also introduce the symbols $\Gamma_{SP}= S \cap P, \Gamma_{SV} = S \cap V$ and $\Gamma = \Gamma_{PV} = P \cap V$ for the interface between substrate--particle, substrate--vapor and particle--vapor, respectively.
Their surface energies will be denoted by $\gamma_{SP}, \gamma_{SV}$ and $\gamma_{PV}$.	
{{We assume $\gamma_{SP}$ and $\gamma_{SV}$ are constant along the respective interface but the particle--vapor interface has orientation-dependent  energy, i.e., $\gamma_{PV}(x) = \gamma(\boldsymbol{n}(x))$, where $\boldsymbol{n}$ is the outer normal  to $P$ at a point  $ x \in \Gamma_{PV}$.
Since the normal $\boldsymbol{n}$ is uniquely identified as $(\cos \theta, \sin \theta)$, where $\theta \in [-\pi, \pi)$ is the angle between  $\boldsymbol{n}$ and the positive direction of $y$-axis measured clockwise from the $y$-axis,
the function $\gamma$ can be considered as a function of one variable $\theta$,
namely, $\gamma(\boldsymbol{n}(x)) =  \gamma(\cos\theta(x),\sin\theta(x)) =: \widetilde{\gamma}(\theta(x))$. In the subsequent text we will omit the tilde for simplicity.}}
The substrate $\Gamma_{S} = \Gamma_{SP} \cup \Gamma_{SV}$ is fixed throughout the evolution but the contact (or free boundary) points $x_{c}^{l}$ and $x_{c}^r$ of the particle with substrate may move due to the deformation of $\Gamma$. 
The area $A$ of the particle region $P$ is assumed to be preserved during the evolution. 
	\begin{figure}[!ht]
		\centering
		\includegraphics[width=0.55\textwidth]{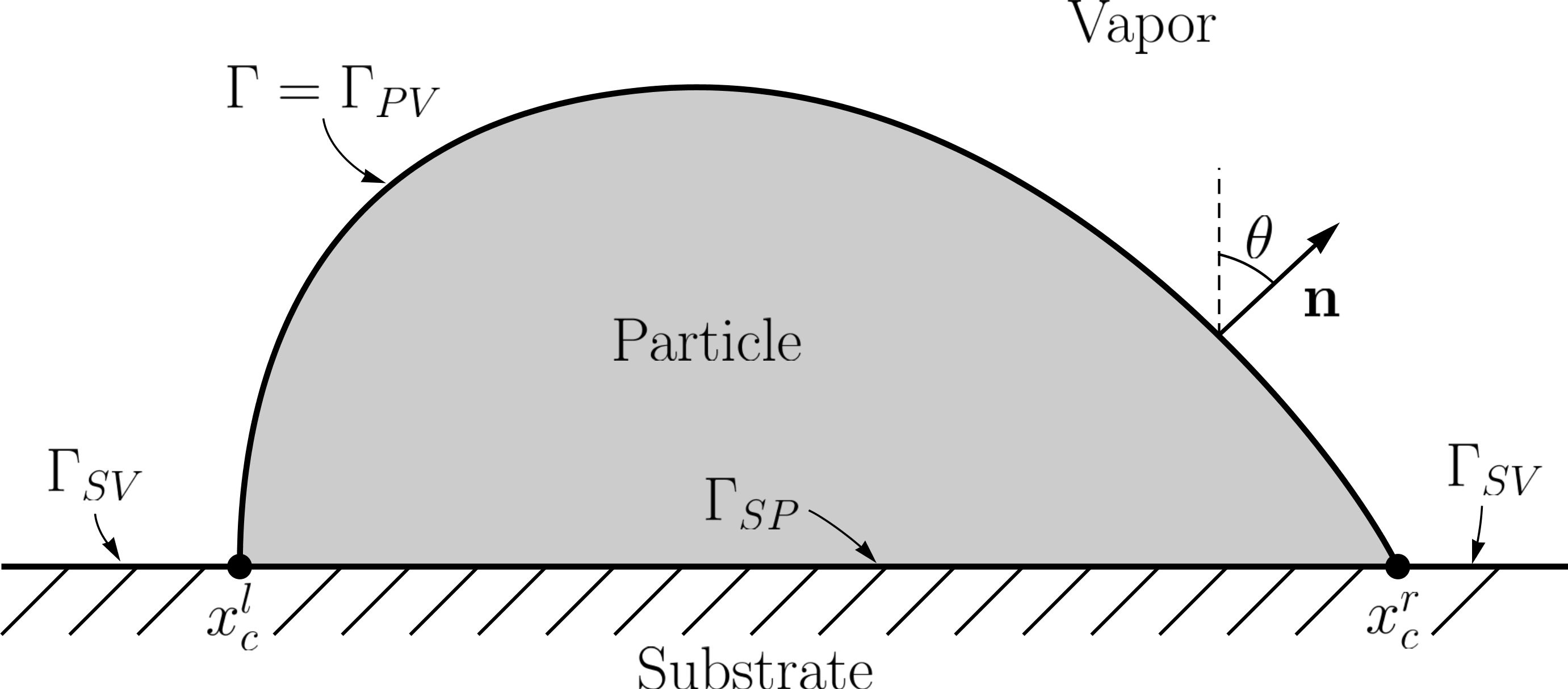}
        		\caption{Setup and notation: a particle on a flat, rigid substrate.} 
		\label{fig:setup}
	\end{figure}
	
Total interfacial energy of this system is given by 
    \begin{equation}
    \label{eq:energy}
 	E(\Gamma) = \int_{\Gamma} \gamma_{PV} \, ds + \int_{\Gamma_{SP}} \gamma_{SP} \, ds  + \int_{\Gamma_{SV}} \gamma_{SV} \, ds ,
	\end{equation}
and equilibrium shapes of particle $P$ are the (local) minima of this energy under the area constraint $|P|=A$. 
The authors in \cite{Bao2017} define equilibrium shapes for $d=2$ as those curves $\Gamma$ at which the first variation with respect to area-preserving normal perturbations and arbitrary tangential perturbations of $\Gamma$ vanishes, and show that this is equivalent to the following two conditions: 
    \begin{eqnarray}
    && {{\kappa (x) \left( \gamma''(\theta(x)) + \gamma(\theta(x)) \right) = C \qquad \text{a.e.} \; x,}}
    \label{eq:firstvar} \\
    && \gamma (\theta) \cos \theta - \gamma'(\theta) \sin \theta + \gamma_{SP} - \gamma_{SV} = 0 \qquad \theta = \theta_c^l, \theta_c^r.
    \label{eq:aniyoung}
    \end{eqnarray}
{{Here $\kappa (x)$ is the curvature of $\Gamma$ at a given point $x$}}, $C$ is a constant determined from the area of particle, and $\theta^l_c, \theta^r_c$ are the angles at the left and right contact points, respectively.
For isotropic surface energy ($\gamma =$ constant), \eqref{eq:aniyoung} reduces to the well-known Young's equation, and thus we call \eqref{eq:aniyoung} the {\it anisotropic Young's equation}, cf. \cite{Wang2015}.
Moreover, based on the second variation, \cite{Bao2017} defines {\it stable equilibria} and shows that a necessary condition for stability is {{
    \begin{equation}
    \label{eq:stab}
        \gamma''(\theta(x)) + \gamma(\theta(x)) \geq 0 \qquad \text{a.e.} \; x.
    \end{equation}
    }}
When $\gamma$ satisfies this condition for all $\theta$, equilibrium shapes can be obtained using the Winterbottom construction, which essentially truncates the Wulff shape corresponding to the anisotropy $\gamma$ at a suitable height determined by the participating surface energies $\gamma_{SP}$ and $\gamma_{SV}$, see \cite{Winterbottom1967}.
The term {\it Wulff shape} indicates the equilibrium shape with least surface energy weighted by the anisotropy $\gamma$ under prescribed area.
For a given anisotropy $\gamma$ it is given by {{
	\begin{equation}
	\label{eq:wulffshape}
	    \mathcal{W}_\gamma 
	    = \left\{ \eta\in{\mathbb{R}}^d \, | \;\;  \eta \cdot \xi \leq 1 \quad \forall \xi \in B_\gamma \right\},
	\end{equation}
where $B_\gamma$ is the unit ball of $\gamma$, named as {\it Frank diagram}, i.e., $B_\gamma = \{ \xi\in{\mathbb{R}}^d\, | \; \gamma(\xi) \leq 1 \}$. Here $\gamma$ is assumed to be extended in a 1-homogeneous way to the whole $\mathbb{R}^d\setminus \{ 0\}$ by $\gamma(\xi)= |\xi| \gamma(\frac{\xi}{|\xi|})$.  }} 
For $d=2$ and anisotropies satisfying \eqref{eq:stab}, the boundary of $\mathcal{W}_\gamma$ can be parametrized as \cite{Wang2015}
    \begin{equation}
    \label{eq:Wulff}
        x(\theta) = -\gamma(\theta) \sin \theta - \gamma'(\theta) \cos \theta, \qquad y(\theta) = \gamma (\theta) \cos \theta - \gamma'(\theta) \sin\theta .
    \end{equation}

The quantity $\gamma'' + \gamma$ turns out to play an important role in determining the properties of the anisotropy function $\gamma$. In particular, we distinguish the following types of anisotropies according to the sign of this quantity (see Figure \ref{fig:Wulff}):
    \begin{enumerate}
    \item[(1)] {\it isotropic} : $\gamma =$ positive constant,
	\item[(2)] {\it weakly anisotropic}: $\gamma (\theta) + \gamma''(\theta) > 0$ for all $\theta$,
    \item[(3)] {\it strongly anisotropic}: there is $\theta$ such that $\gamma(\theta) + \gamma''(\theta) < 0$,
	\end{enumerate}
{\it Wulff envelope} is the curve defined by equations \eqref{eq:Wulff} but for strong anisotropies it does not coincide with the boundary of Wulff shape because it forms "ears" due to self-intersection.
The above classification pertains only to smooth anisotropies $\gamma$ but in applications it often happens that the Wulff shape is a polytope, in which case we call the anisotropy {\it crystalline}. 
In this paper, we deal only with weak anisotropies $\gamma$ with at least $C^2$ smoothness, and therefore we assume that crystalline anisotropies are appropriately regularized.
		
	\begin{figure}[!ht]
	    \centering
	    \includegraphics[width=\textwidth]{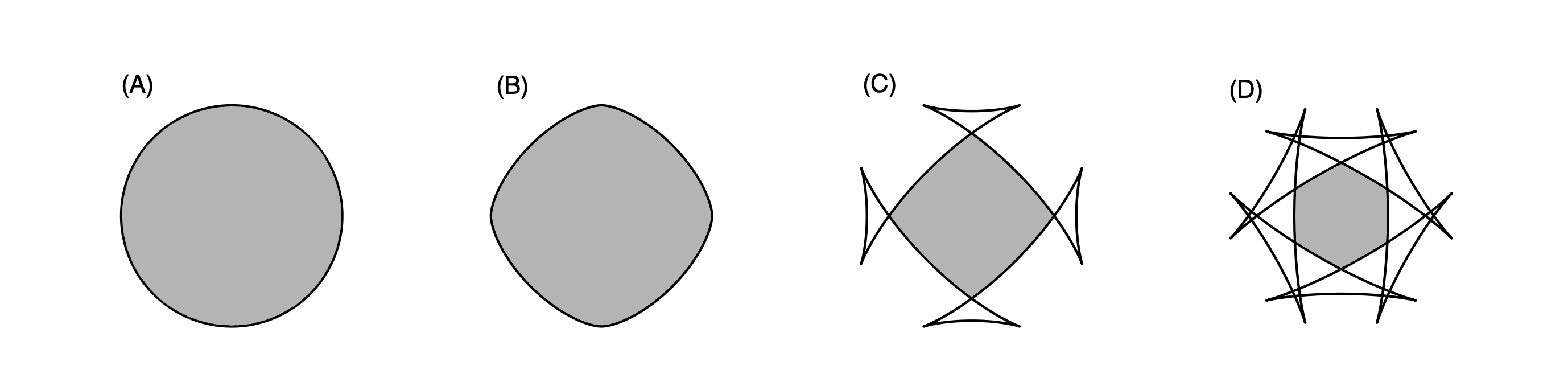}
	    \caption{Wulff envelopes (black line) and Wulff shapes (gray region) for anisotropies of the form $\gamma (\theta)  = 1+ \beta \cos (m\theta)$: (A) isotropic ($\beta=0$), (B) weakly anisotropic ($\beta=0.05, m=4$), (C) strongly anisotropic ($\beta=0.25, m=4$), and (D) strongly anisotropic with intersecting ears ($\beta=0.25, m=6$).}
	\label{fig:Wulff}
	\end{figure}

The aim of this article is to numerically analyze evolution of particles towards equilibrium in two dimensions, including possible topology changes.
In order to do that, we select one of the simplest evolution laws, namely the $L^{2}$-gradient flow of the energy \eqref{eq:energy}, which is usually called the {\it weighted mean curvature flow}. 
In the simple case of one particle undergoing no topology change, there are only two contact points, and a standard derivation analogous to \eqref{eq:firstvar}--\eqref{eq:aniyoung} yields the following evolution problem: 
    \begin{eqnarray}
&& {{ V_\perp (x) = - \mu(\theta(x)) \kappa (x) \left( \gamma''(\theta(x)) + \gamma(\theta(x)) \right) + C, }} 
    \label{eq:evol1}\\
        && \gamma (\theta) \cos \theta - \gamma'(\theta) \sin \theta + \gamma_{SP} - \gamma_{SV} = 0 \qquad \theta = \theta_c^l, \theta_c^r,
    \label{eq:evol2}
    \end{eqnarray}
where $V_\perp$ is the outward normal velocity of the $\Gamma$-interface, the positive function $\mu$ is the mobility of this interface and $C$ is a constant.
Notice that the sign of $\gamma''+\gamma$ is again important here, since when $\gamma''(\theta)+\gamma(\theta)$ is negative for some angles $\theta$, equation \eqref{eq:evol1} becomes backwards parabolic and thus ill-posed.
	
When topology changes occur, such as merging and splitting of particles, equation \eqref{eq:evol1} still holds for smooth parts of $\Gamma$ away from singularities but it is not anymore possible to describe the evolution fully using simple formulas such as \eqref{eq:evol1}--\eqref{eq:evol2}.
For smooth weak anisotropies, one can give a precise mathematical definition in terms of functions of bounded variation in a similar manner to Definition 1.1 of multiphase mean curvature flow in \cite{Laux2016}.
Here we develop a numerical scheme that automatically deals with topology changes but since a rigorous convergence proof for our scheme is out of the scope of this paper, we omit theoretical details. 

\section{Thresholding and anisotropic kernels}
\label{sec:twophase}

In this section, we present a numerical method for computing weighted mean curvature flows. The method is based on the level set approach since our focus is on natural treatment of topology changes.

\subsection{Thresholding scheme}
\label{sec:BMO}

To explain the basic ideas, we consider a free particle $P$ in a vapor domain $\Omega \subset {{\mathbb{R}^d}}$, without any substrate.
Denoting the particle--vapor interface by $\Gamma$ and it's orientation-dependent energy density by  {{$\gamma (\boldsymbol{n})$}}
, the total energy of this system is simply 
\begin{equation}
\label{eq:energy2}
E(\Gamma) = \int_{\Gamma} \gamma {{(\boldsymbol{n}) \hspace{.1cm}ds   }} .
\end{equation}
It is known that if the initial shape of the particle satisfies certain conditions, the weighted mean curvature flow of this energy shrinks the particle to a point while asymptotically approaching the Wulff shape corresponding to the anisotropy $\gamma$.

One of effective level set based methods to numerically realize such evolution is the {\it BMO algorithm} proposed in \cite{BMO1994}. 
It was originally designed for isotropic energies and repeats two steps: convolution with Gaussian kernel and thresholding.
Later it was discovered that replacing the Gaussian with a suitable kernel $K$ yields weighted mean curvature flows, see Algorithm 1.
\begin{algorithm}

	\caption{Anisotropic two-phase BMO algorithm} 
		Given a time step $\delta t$ and a particle region $P^k \subset \mathbb{R}^d$ at time $t_k$, to get new region $P^{k+1}$ at next time step $t_{k+1} = t_k+ \delta t$, perform the following two steps:
			\begin{eqnarray}
			\label{BMO2phase1}
				&&\text{Convolution:} \qquad \; U^{k}   = K_{\delta t} \ast \one_{P^{k}}\\
			\label{BMO2phase2}
				&&\text{Thresholding:} \qquad P^{k+1}  =  \left\{ x \,| \;\; U^{k} (x) \geq \frac{1}{2} \int_{\mathbb{R}^d} K {{(y) \, dy }} \right\}
			\end{eqnarray}
	Here $\one_{P^{k}}$ is the characteristic function of the set $P^{k}$ and $K_{\delta t} {{(x)}} = \frac{1}{(\delta t)^{d/2}} K \left( \frac{x}{\sqrt{\delta t}} \right)$.
\end{algorithm} 

Let us briefly comment on the convergence and stability of this algorithm.
It was proved in \cite{Ishii1999} that if the kernel $K$ is {\it positive}, satisfies
\begin{equation}
\label{eq:kernelassum}
K(x) \in L^1 (\mathbb{R}^d), \; xK(x) \in L^1 (\mathbb{R}^d), \;  K(x) = K(-x), \; \int_{\mathbb{R}^d} K(x) \, dx = 1, 
\end{equation}
and {{several other technical conditions (see (3.2)-(3.4) and (3.7) in \cite{Ishii1999} for details)}}, then the above algorithm converges as $\delta t\to 0$ to the viscosity solution of weighted mean curvature flow given by the equation 
$$ u_t(t,x) = \left( \int_{Du(x)^{\perp}} K(y) \, d{{\mathcal{H}^{d-1}}}(y) \right)^{-1} \left(\frac{1}{2} \int_{Du(x)^{\perp}} \langle D^2u(x)y,y \rangle K(y) \, d{{\mathcal{H}^{d-1}}}(y) \right),$$
{{where $Du$ and $D^2u$ denote the gradient vector and Hessian matrix of $u$,
$\mathcal{H}^k$ denotes the $k$-dimensional Hausdorff measure and  $\eta^\perp$ is the orthogonal complement of a vector $\eta$.}}
The positivity of the kernel is essential since it guarantees a comparison principle, on which the theory of viscosity solutions is based.
On the other hand, if we ignore topology changes and are interested only in the normal velocity of a smooth interface, a formal proof of convergence based on Taylor expansions is straightforward (see, e.g., the Appendix of \cite{Elsey2018}).

The unconditional gradient stability of Algorithm 1 was proved in \cite{Esedoglu2015} in the following form:
\begin{prop}
	\label{prop:stability}
	 Let $K$ satisfy \eqref{eq:kernelassum}.
	If $\widehat{K}\geq 0 $, where $\widehat{K}$ is the Fourier transform of $K$, then for any time step size $\delta t>0$, Algorithm 1 decreases the energy 
	\begin{equation}
\label{eq:lyapunov}
 E_{{\delta t}}  (P, K) = \frac{1}{\sqrt{\delta t}}\int_{\mathbb{R}^d \setminus P}  K_{{\delta t}} \ast \one_{P} \, dx
    \end{equation}
	at every time step.
\end{prop}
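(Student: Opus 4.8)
The plan is to reproduce the variational (minimizing-movements) interpretation of thresholding used in \cite{Esedoglu2015}, which exhibits Algorithm 1 as a sequence of \emph{linearized} minimizations of a \emph{concave} energy; the monotone decrease then follows from a short sandwich argument. First I would rewrite the energy in bilinear form. Setting $u=\one_{P}$ and using $\int_{\mathbb{R}^d} K_{\delta t}=\int_{\mathbb{R}^d} K=1$ together with Fubini, one gets
\begin{equation*}
E_{\delta t}(P,K)=\frac{1}{\sqrt{\delta t}}\int_{\mathbb{R}^d}(1-u)\,(K_{\delta t}\ast u)\,dx=\frac{1}{\sqrt{\delta t}}\,|P|-J(u),\qquad J(u):=\frac{1}{\sqrt{\delta t}}\int_{\mathbb{R}^d}u\,(K_{\delta t}\ast u)\,dx.
\end{equation*}
Since the particle has finite measure, $u\in L^1\cap L^2$ and $K\in L^1$, so by Young's inequality $K_{\delta t}\ast u\in L^2$ and every integral above is finite. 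I would then relax the admissible class from characteristic functions to measurable $u:\mathbb{R}^d\to[0,1]$, a convex set on which $E_{\delta t}$ is well defined.

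The crucial structural fact is that $E_{\delta t}$ is concave on this convex set. As the term $\frac{1}{\sqrt{\delta t}}|P|=\frac{1}{\sqrt{\delta t}}\int_{\mathbb{R}^d} u\,dx$ is linear in $u$, concavity of $E_{\delta t}$ is equivalent to nonnegativity (hence convexity) of the quadratic form $J$. This is exactly where the hypothesis $\widehat{K}\ge 0$ enters: by the symmetry $K(x)=K(-x)$ the operator $v\mapsto K_{\delta t}\ast v$ is self-adjoint, and Plancherel's theorem yields $J(u)=\frac{1}{\sqrt{\delta t}}\int_{\mathbb{R}^d}\widehat{K_{\delta t}}(\xi)\,|\widehat{u}(\xi)|^2\,d\xi$ with $\widehat{K_{\delta t}}(\xi)=\widehat{K}(\sqrt{\delta t}\,\xi)\ge 0$. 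Thus $J\ge 0$ is convex, $E_{\delta t}$ is concave, and in particular $E_{\delta t}$ lies below each of its tangent planes.

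Next I would identify the thresholding step as the minimizer of the linearization of $E_{\delta t}$. Computing the first variation and using self-adjointness of the convolution, the affine linearization of $E_{\delta t}$ at $u=\one_{P^k}$ is
\begin{equation*}
\ell_u(v)=E_{\delta t}(u)+\frac{1}{\sqrt{\delta t}}\int_{\mathbb{R}^d}\bigl(1-2\,K_{\delta t}\ast u\bigr)(v-u)\,dx.
\end{equation*}
Minimizing $\ell_u$ pointwise over $v(x)\in[0,1]$ forces $v=1$ where $K_{\delta t}\ast u>\tfrac12$ and $v=0$ where $K_{\delta t}\ast u<\tfrac12$; recalling $\tfrac12=\tfrac12\int_{\mathbb{R}^d} K$, this is precisely the rule \eqref{BMO2phase2} defining $P^{k+1}$, so $v^{\star}=\one_{P^{k+1}}$ minimizes $\ell_u$ over the relaxed class.

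Finally I would chain the two inequalities. Concavity gives $E_{\delta t}(\one_{P^{k+1}})\le \ell_u(\one_{P^{k+1}})$, while the minimizing property gives $\ell_u(\one_{P^{k+1}})\le \ell_u(\one_{P^{k}})=E_{\delta t}(\one_{P^{k}})$; together these yield $E_{\delta t}(P^{k+1},K)\le E_{\delta t}(P^{k},K)$, the claimed energy decrease at each step. The only genuinely delicate point is the concavity step, which rests entirely on $\widehat{K}\ge 0$; I expect the care needed to justify the Plancherel identity (finiteness of $J$, membership $\one_P\in L^2$, and the scaling $\widehat{K_{\delta t}}(\xi)=\widehat{K}(\sqrt{\delta t}\,\xi)$) to be the main thing to get right, while the linearization and the sandwich estimate are routine bookkeeping.
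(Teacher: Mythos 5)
Your proposal is correct and follows essentially the same route as the paper and its cited source \cite{Esedoglu2015}: relax to $[0,1]$-valued functions, use $\widehat{K}\ge 0$ and Plancherel to get concavity of the relaxed energy, identify thresholding as the minimizer of the linearization, and chain the two inequalities (the paper's own proof of the obstacle analogue, Theorem \ref{thm_stability}, performs the identical argument, writing the concavity step as the explicit nonpositive quadratic remainder $-\frac{1}{\sqrt{\delta t}}\int (u^{k+1}-u^k)\,K_{\delta t}\ast(u^{k+1}-u^k)\,dx$). No gaps.
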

The energy $E_{\delta t}$ is an approximation to the anisotropic perimeter \eqref{eq:energy2} in the sense that if \eqref{eq:kernelassum} holds and $P$ is a compact subset of $\mathbb{R}^d$ with smooth boundary, then by \cite{Elsey2018},
\begin{equation}
    \label{eq:limlya}
    \lim_{\delta t\to 0} E_{\delta t} (P,K) = \int_{\partial P} \gamma_K(n(x)) \, d{{\mathcal{H}^{d-1}}}(x), \quad \text{where} \;\; \gamma_K (n) = \frac{1}{2} \int_{\mathbb{R}^d} {{| n \cdot y| K({y}) \, dy}} .
\end{equation}
Here, $n(x)$ denotes the outer unit normal to $P$ at a point $x\in\partial P$. The proof of Proposition \ref{prop:stability} in \cite{Elsey2018, Esedoglu2015} clarifies the role of the positive Fourier transform $\widehat{K}$: Algorithm 1 is equivalent to the minimization of the linearization of a relaxed version of the energy $E_{\delta t}$ and the positive Fourier transform guarantees the concavity of this relaxed energy.
The paper \cite{Elsey2018} also derives the form of the corresponding mobility:
\begin{equation}
    \label{eq:kermob}
    \mu_K (n) = \left( \int_{n^{\perp}} K(x) \, d{{\mathcal{H}^{d-1}}}(x) \right)^{-1} .
\end{equation}
This means that one step of Algorithm 1 moves a smooth interface with the normal speed {{
$$ V_{\perp}(x) = - \mu_K(n(x)) \kappa (x) \left( \gamma_K''(n(x)) + \gamma_K(n(x)) \right) $$
}}
to leading order in $\delta t$, where $\gamma_K,\mu_K$ are given by \eqref{eq:limlya} and \eqref{eq:kermob}, respectively.
{{This means that over a short time $\delta t$ the position of a point on the interface obtained by the algorithm deviates from the position prescribed by the above equation by a distance of order at most $\delta t^2$.}} 

To summarize, we have both convergence and stability of Algorithm 1 if $K\geq 0$ and $\widehat{K} \geq 0$ hold at the same time, in addition to further regularity properties of the kernel. 
However, it turns out to be impossible to satisfy these conditions simultaneously for certain anisotropies, as the following theorem shows.

\begin{thm}[\cite{Elsey2018}]
	\label{thm_barrier}
	Threshold dynamics algorithm  \eqref{BMO2phase1} and \eqref{BMO2phase2}
	with a positive kernel can approximate a given weighted mean curvature flow 
	if and only if the Wulff shape corresponding to the anisotropy is a zonoid. 
	Moreover, if the Wulff shape is not a zonoid then a positive convolution kernel cannot be found for any other anisotropy the Wulff shape of which is close enough in the Hausdorff metric. 
\end{thm}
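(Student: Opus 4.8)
The plan is to reduce the entire statement to a classical fact of convex geometry, namely the characterization of zonoids by their support functions. The bridge is the formula \eqref{eq:limlya} for the surface energy $\gamma_K$ generated by a kernel $K$. First I would rewrite that integral in polar coordinates: writing $y=r\omega$ with $\omega\in S^{d-1}$, $r>0$, one has $|n\cdot y|=r\,|n\cdot\omega|$ and $dy=r^{d-1}\,dr\,d\omega$, so that
\begin{equation*}
\gamma_K(n)=\frac12\int_{\mathbb{R}^d}|n\cdot y|\,K(y)\,dy=\int_{S^{d-1}}|n\cdot\omega|\,d\mu_K(\omega),\qquad d\mu_K(\omega)=\frac12\Big(\int_0^\infty r^{d}K(r\omega)\,dr\Big)d\omega.
\end{equation*}
If $K$ is positive and even, as in \eqref{eq:kernelassum}, then $\mu_K$ is a nonnegative even measure on the sphere. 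On the other side, the anisotropy we wish to realize is $\gamma$, and by the definition \eqref{eq:wulffshape} the Wulff shape is the polar dual of the Frank diagram $B_\gamma$; since for the weak (hence convex) anisotropies considered here $\gamma$ is the Minkowski gauge of $B_\gamma$, the support function of $\mathcal{W}_\gamma$ is exactly $\gamma$. Hence ``a positive kernel realizes the anisotropy $\gamma$'' translates into ``the support function of $\mathcal{W}_\gamma$ admits a representation $h_{\mathcal{W}_\gamma}(n)=\int_{S^{d-1}}|n\cdot\omega|\,d\mu(\omega)$ with $\mu\ge 0$ even,'' which is precisely the defining property of a \emph{zonoid}.

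With this dictionary in place, both implications are short. For the forward direction, a positive kernel with $\gamma_K=\gamma$ produces, through $\mu_K$, a nonnegative representing measure, so $\mathcal{W}_\gamma$ is a zonoid. For the converse, if $\mathcal{W}_\gamma$ is a zonoid there is a nonnegative even measure $\mu$ with $\gamma(n)=\int_{S^{d-1}}|n\cdot\omega|\,d\mu(\omega)$, and I would recover a kernel by spreading $\mu$ radially: if $\mu=g(\omega)\,d\omega$ has a density, set $K(r\omega)=g(\omega)\,\phi(r)$, where $\phi\ge0$ is a fixed radial profile normalized so that $\tfrac12\int_0^\infty r^{d}\phi(r)\,dr=1$; then $K\ge0$, $K$ is even, and $\gamma_K=\gamma$. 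The qualifier ``approximate'' in the statement accounts for a singular $\mu$ (and for the further technical conditions of \cite{Ishii1999} that $K$ must also meet): one mollifies $\mu$ on the sphere to a nearby absolutely continuous measure, producing kernels whose anisotropies $\gamma_K$ converge uniformly to $\gamma$ and hence flows that approximate the target flow.

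For the \emph{moreover} part I would invoke the closedness of the class of zonoids. By definition zonoids are Hausdorff limits of zonotopes, so the set of zonoids is closed in the Hausdorff metric; consequently the set of convex bodies that are \emph{not} zonoids is open. Thus if $\mathcal{W}_\gamma$ fails to be a zonoid, a whole Hausdorff-neighborhood of it consists of non-zonoids, and by the equivalence established above no anisotropy whose Wulff shape lies in that neighborhood can admit a positive kernel.

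I expect the main obstacle to be the rigorous backward direction rather than the clean algebraic dictionary. Turning a representing measure into an \emph{admissible} kernel requires controlling the singular part of $\mu$ while simultaneously satisfying all of the hypotheses \eqref{eq:kernelassum} together with the additional conditions of \cite{Ishii1999}, and this is exactly what forces the word ``approximate.'' The deeper analytic content sits in the claim that the correspondence $\mu\mapsto\gamma_K$ is essentially a bijection onto zonoidal support functions --- i.e.\ injectivity of the spherical cosine transform on even measures --- which is what guarantees that a non-zonoidal $\mathcal{W}_\gamma$ genuinely has \emph{no} nonnegative representing measure, so that the obstruction is intrinsic and not merely an artifact of a poor choice of kernel.
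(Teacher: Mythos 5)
The paper states this theorem purely as a citation from \cite{Elsey2018} and supplies no proof of its own, so there is no internal argument to compare against; your reconstruction is nevertheless correct and is essentially the argument of the cited source: rewriting \eqref{eq:limlya} in polar coordinates exhibits $\gamma_K$ as the cosine transform of the nonnegative even measure $\mu_K$, the support function of $\mathcal{W}_\gamma$ is $\gamma$ itself, and the classical characterization of zonoids together with their closedness in the Hausdorff metric yields both implications and the ``moreover'' clause, with the radial-spreading/mollification construction handling the converse. One small correction: the obstruction in the forward direction does not rest on injectivity of the spherical cosine transform as your last paragraph suggests --- it needs only the definitional equivalence ``zonoid $\Leftrightarrow$ existence of a nonnegative representing measure''; injectivity would give uniqueness of the generating measure, which is not used anywhere in the argument.
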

In 2D, every centrally symmetric body is a zonoid but in 3D the condition becomes much more restrictive -- for example, the regular octahedron is not a zonoid.
This problem is addressed in \cite{Esedoglu2017}, where a refined version of Algorithm 1 is developed, which involves more convolution steps but allows for a much wider class of anisotropies and mobilities.
On the other hand, it is important to note that stability and convergence of Algorithm 1 hold, under certain conditions, in arbitrary spatial dimension, and its practical implementation in higher dimensions does not entail any technical complications, which is a major advantage over explicit methods.

\subsection{Anisotropic kernels}
\label{sec_aniker}

When a kernel is given, the properties of the corresponding thresholding scheme can be studied as explained in the previous section. 
The inverse problem of finding a suitable convolution kernel for a given anisotropy and mobility has been investigated over the last two decades.  
The first partially successful attempt was made in \cite{Ruuth2000} where kernels in the form of characteristic function of a carefully chosen set were constructed for two-phase anisotropic evolution in two dimensions.
However, as pointed out later in \cite{Elsey2018}, these kernels do not correctly separate the effect of anisotropic energy and mobility, and thus are not applicable to multi-phase evolution such as the one considered in this paper. 
This can be observed from equations \eqref{eq:evol1}-\eqref{eq:evol2}, where the effect of surface energy and mobility are inseparably combined in the normal velocity but the contact angle condition depends solely on the surface energy.
For this reason, we will review here only kernels developed after this seminal work.

\textbf{Bonnetier--Bretin--Chambolle kernels} \cite{Bonnetier2012} (abbreviated as BBC below) were developed by linearizing in Fourier space the nonlinear diffusion equation corresponding to the evolution by a given anisotropy. 
This led the authors to {{the following expression of the kernel in Fourier domain:}}
$$  \widehat{K}(\xi)=  e^{- 4\pi^2 \gamma^2(\xi)}, $$
{{Here $\gamma$ is the 1-homogeneous extension of the anisotropy function and the Fourier transform is defined by}}
$$ \widehat{K} (\xi) = \int_{{\mathbb R}^d} K(x) e^{-2 \pi i x \cdot \xi} \, dx .$$
An advantage of this type of kernels is that they are always positive in Fourier domain and thus by Proposition \ref{prop:stability} guarantee the stability of the thresholding scheme.
On the other hand, they are not always positive in physical domain and thus lack a proof of convergence. 
For general anisotropies these kernels show a slow decay, which is inefficient from the viewpoint of numerical calculations.
Moreover, the mobility, being fixed by the construction to the natural mobility  $\mu = \gamma$, cannot be freely prescribed.

\textbf{Elsey--Esedoglu kernels} (EE) \cite{Elsey2018} are constructed so that they are positive in both physical and Fourier domain provided that the Wulff shape of the anisotropy is a zonoid. 
The idea is to write the $d$-dimensional kernel as a weighted sum of smoothed one-dimensional Gaussians defined in a direction $\nu \in \mathbb{S}^{d-1} {{ = \{ \xi \in \mathbb{R}^d \, | \, \| \xi \| = 1\} }}$ by
$$ g_{\nu, \epsilon} (x) =\frac{1}{(4\pi)^{\frac{d}{2}}} \hspace*{.1cm} \exp \left(-\frac{(x \cdot \nu)^2}{4}\right) \hspace*{.1cm} \frac{1}{\epsilon^{d-1}} \exp \left(\frac{(x \cdot \nu)^{2} - ||x||^2}{4\epsilon^2}\right),$$
(here the second exponential is a smoothing of $\delta$-function using a parameter $\epsilon$) and solve \eqref{eq:limlya} to obtain the correct weight. 
This yields the formula
\begin{equation}
\label{eq:EEkernel}
K_{\epsilon} (x) = \sqrt{\pi} \int_{\mathbb{S}^{d-1}} (T^{-1} \gamma) {{(\nu)}} g_{\nu, \epsilon} (x) \, d{{\mathcal{H}}}^{d-1}(\nu),
 \end{equation}
where $T^{-1}$ is the inverse cosine transform, which can be in 2D expressed simply by
\begin{equation}
\label{eq:EEinvcos}
T^{-1} \gamma(\theta)  = \frac{1}{4} {{ \left(\frac{d^2}{d\theta^2} + I\right)}}  \hspace*{.1cm}  \gamma \left( \theta + \frac{\pi}{2}\right).
\end{equation}
{{Here, as above, $\gamma(\theta)$ is a shorthand for $\gamma(\cos\theta,\sin\theta)$.}}
 A drawback of this construction is that the mobility is fixed to 
\begin{equation}
    \label{eq_mobee}
    \mu_{\epsilon}(n) = \sqrt{\pi}\left( \int_{\mathbb{S}^{d-1}} (T^{-1} \gamma) {{(\nu)}} \left( (1-\epsilon^2)(\nu \cdot n)^2 + \epsilon^2 \right)^{-1/2} \, d {{\mathcal{H}}}^{d-1}(\nu) \right)^{-1} ,
\end{equation}
and thus cannot be freely designed (note the difference of this formula from the corresponding one in \cite{Elsey2018}).

\textbf{Esedoglu--Jacobs--Zhang kernels} (EJZ) \cite{Esedoglu2017b} are most general because they can realize essentially any admissible anisotropy--mobility pair. Admissibility here means the smoothness and convexity assumptions on this pair required for the existence of solution. The authors construct kernels in 2D and in 3D of two types: one that is positive in the real domain (for zonoidal anisotropies) and one in Schwartz class that is positive in the Fourier domain.

{{(I) Kernel positive in real domain:}} The construction starts with fixing the polar form of the kernel to $K(r, \theta) = \alpha(\theta ) \eta(r \beta(\theta))$, {{ along with}}
$$ \eta (x)= 
\begin{cases}
\exp\left(\frac{-1}{x^2 (x-2)^2}\right),& \text{if } x\in (0,2)\\
0,              & \text{otherwise}
\end{cases} \;\; $$
and proceeds with determining the unknown functions $\alpha,\beta$ by solving \eqref{eq:limlya} and \eqref{eq:kermob}. This results in
\begin{equation}
    \label{ab1}
 \alpha^2 (\theta) = {\frac{m_2}{32 m_0^3 \mu^2(\theta - \frac{\pi}{2}) \sigma (\theta - \frac{\pi}{2})} }, \quad 
 \beta^2 (\theta) = {\frac{m_2}{2 m_0  \sigma (\theta - \frac{\pi}{2})} },
 \end{equation}
where $\sigma(\theta) = \mu(\theta) (\gamma'' (\theta) + \gamma (\theta))$ and {{$ m_j = \int_{0}^{2} x^j \eta(x)\, dx, \; j=0,2$}}.\\

{{(II) Kernel positive in Fourier domain:}} The ansatz is 
$$ \widehat{K} (\xi)= \frac{1}{2} \exp (-\zeta(\alpha(\xi))) + \frac{1}{2} \exp (-\zeta(\beta(\xi))),$$ 
where $ \zeta : \mathbb{R} \rightarrow \mathbb{R}$ is a positive, smooth and even function satisfying 
$\zeta(x)=0$ if $|x|\leq 1$ and $\zeta(x)=x^2$ if $|x| \geq 2$. This is a modification of BBC kernel with the purpose of eliminating the singularity of the Fourier transform of BBC kernel which may appear at the origin for certain anisotropies, and thus improving decay properties of the kernel that are important for numerical efficiency.
By solving \eqref{eq:limlya} and \eqref{eq:kermob} again, now with mobility multiplied by a constant $c^2$ to ensure solvability, one obtains specific formulas for $\alpha$ and $\beta$ as
\begin{equation}
\label{ab2}
\alpha(n) = \tfrac{\pi}{s_2c} (q(n) + \sqrt{q^2(n)-r(n)}), 
\hspace*{.2cm}\beta(n) = \tfrac{\pi}{s_2c} (q(n) - \sqrt{q^2(n)-r(n)}),
\end{equation}
where $q(n) = c\gamma(n)$, $r(n) = 8s_0 s_2 \mu(n) \gamma(n)$, and $s_0=\frac{1}{4\pi} \int_{{\mathbb{R}}} e^{-\zeta(\xi)}\, d\xi$, $s_2=\frac{1}{4\pi} \int_{{\mathbb{R}}} \frac{1-e^{-\zeta(\xi)}}{\xi^2}\, d\xi$ (see \cite{Esedoglu2017b} for details).
We remark that the formulas \eqref{ab1} and \eqref{ab2} contain errors in the original paper.
 Moreover, our formulas differ also due to different definition of Fourier transform.

\subsection{Numerical performance of kernels}
In order to assess the performance of each of the above kernels in thresholding algorithms, we carried out three types of numerical tests: convergence analysis for a simple anisotropy (with ellipse as Wulff shape), experimental analysis for crystalline anisotropy (with square as Wulff shape) and simulation with a complex nonconvex geometry.

In the numerical tests we use Algorithm 1 
to advance the interface. 
The convolution in the diffusion step of the algorithm is effectively calculated employing the Fast Fourier Transform (FFT) yielding
\begin{equation}
\label{eq:fft}
    \widehat{U}_k = \widehat{K}_{\delta t} \widehat{\one}_{P^k},
\end{equation}
and then taking the inverse transform to get $U_k$.
{{The computational domain was chosen as $[-5,5]\times [-5,5]$ for all numerical tests in this paper. It was discretized into a rectangular grid with grid cells of size $dx \times dx$ by dividing both spatial coordinates into the same number of subintervals of length $dx$.}}
The thresholding step in the algorithm then amounts to performing a check at each grid point whether the value of diffused function $U^k$ is greater or less than $\frac{1}{2}\int K$, and assigning the value $1$ to $P^{k+1}$ at the grid point if the value is greater, and the value $0$ otherwise.
However, it is well known that this approach leads to non-smooth behavior of errors when the spatial and temporal grids are refined. The error decreases on the whole with refinement of grid but shows sudden jumps due to interface pinning on grid points. To avoid this unwanted effect in numerical analysis, we implement a simple idea for obtaining a sub-grid spatial accuracy, presented already in previous works \cite{Esedoglu2017b}. 
Namely, we calculate the intersections of the $\frac{1}{2}\int K$-level set of $U^k$ with grid lines, use this information to obtain the ratio of area that each phase (i.e., in this two-phase case, $P^{k+1}$ and ${\mathbb{R}}^2\setminus P^{k+1}$) occupies in every grid cell, and based on these ratios in 4 grid cells common to a grid point assign a value between $0$ and $1$ to $P^{k+1}$ at that grid point.

\subsubsection{Convergence analysis}
\label{sec:conv2phase}
Since numerical behavior of anisotropic kernels has not yet been investigated in detail elsewhere, we first carried out a convergence analysis in 2D for the simple elliptic anisotropy $\gamma (x,y) = \sqrt{(ax)^2 + (by)^2}$ with $a=2$, $b=1$, and the natural mobility $\mu = \gamma$. This setup fulfills all conditions required for the above theoretical results to hold.
Solving \eqref{eq:wulffshape}, or equivalently \eqref{eq:Wulff}, one finds that the corresponding Wulff shape is an ellipse with boundary $(\frac{x}{b})^2 + (\frac{y}{a})^2 = 1$.

In the numerical tests, we start with initial condition identical to the Wulff shape, and apply Algorithm 1 to evolve it by the weighted mean curvature flow \eqref{eq:evol1} with $\mu = \gamma$ (except for EE kernels) and $C=0$. 
It is known (see, e.g., \cite{Yagisita2006,Sevcovic2011}) that the analytical solution is self-similar: the initial Wulff shape shrinks in size without changing its shape. 
To obtain the speed of shrinkage, write the evolving Wulff envelope \eqref{eq:Wulff} as $W_{\gamma}(\theta,t) = \eta(t) W_{\gamma}(\theta,0)$ and notice that by construction we have $W_{\gamma}(\theta,t)\cdot n(\theta) = -\gamma(\theta)$, where $n$ is the unit normal. Moreover, the anisotropic curvature $(\gamma + \gamma'')\kappa$ is equal to $1$ for the Wulff envelope $W_{\gamma}(\theta,0)$ and scales as $1/\eta(t)$ for $W_{\gamma}(\theta,t)$.
Hence, from the formula for the normal velocity $V_{\perp} = \partial_t W_{\gamma} \cdot n = \partial_t \eta (-\gamma) = \gamma (\gamma + \gamma^{''})\kappa = \gamma / \eta$,
we arrive at the ODE $\partial_t \eta = - 1/ \eta$ for $\eta(t)$ with initial condition $\eta(0)=1$, yielding
\begin{equation}
\label{eq:selfsim}
W_{\gamma}(\theta,t)  = \sqrt{1-2t} \;\; W_{\gamma}(\theta,0), \qquad t \in (-\infty, \tfrac{1}{2}].
\end{equation}
This formula indicates that the ellipse shrinks to a point at $t=\frac{1}{2}$.

Convergence order and efficiency of kernels were investigated by executing Algorithm 1
for various combinations of spatial mesh size $dx$ and time step $\delta t$, and for each of the kernels introduced in Section \ref{sec_aniker}.
The initial condition was chosen as the ellipse $x^2+ (\frac{y}{2})^2 =1$.
\begin{table}
\scriptsize
\noindent
\begin{center}
Bonnetier-Bretin-Chambolle kernel ($dx=0.00061$, $N=2^{14}$)\\
		\begin{tabular}{|l|r|r|r|r|r|r|} 
			\hline 
			Time step $\delta t$ &    0.03125  &   0.01562 & 0.00781&  \cellcolor[gray]{0.8} 0.00390 &   0.00195 &    0.00097 \\ 
			\hline
			$ L^{\infty}$-error &  0.05836 &   0.02564 & 0.01207 & \cellcolor[gray]{0.8}   0.00589 &  0.00297&   0.00161 \\ \hline
			Convergence order     &	-	&	{1.19} & {1.09} &	{\cellcolor[gray]{0.8} 1.04} & {0.99} &	{0.88}   \\ \hline
			CPU time (min) & 14 & 28 & 57 & \cellcolor[gray]{0.8} 113 & 227 & 453 \\ \hline
		\end{tabular}
\vspace{.01cm}\\			
\noindent
Esedoglu-Jacobs-Zhang kernel: positive in Fourier ($dx=0.00061$, $N=2^{14}$)	\\
		\begin{tabular}{|l|r|r|r|r|r|r|} 
			\hline 
			Time step $\delta t$ & \cellcolor[gray]{0.8}  0.00390 &   0.00195 &    0.00097&    0.00048 & 0.00024 & 0.00012 \\ 	\hline
		    $ L^{\infty}$-error &  {\cellcolor[gray]{0.8}  0.64610 } &   {0.54342} &  { 0.35265} &    0.09938 &    0.02964 &    0.01246  \\ \hline
			Convergence order      &\cellcolor[gray]{0.8} 	-	& { 0.25}  & {0.62 }   & {1.82}    & 1.75   &  1.25  \\ \hline
			CPU time (min) & \cellcolor[gray]{0.8}    25 & 47 & 95 & 191 & 382 & 760 \\ \hline
		\end{tabular}
			
\noindent
Esedoglu-Jacobs-Zhang kernel: positive in physical ($dx=0.00122$, $N=2^{13}$)\\
		\begin{tabular}{|l|r|r|r|r|r|r|} 
			\hline 
			Time step $\delta t$ &   0.01562 & 0.00781& \cellcolor[gray]{0.8}  0.00390 &   0.00195 &    0.00097&    0.00048  \\ \hline
		    $ L^{\infty}$-error &      0.08137 &    0.04094 &  \cellcolor[gray]{0.8}  0.02058 &  0.01041 & 0.00540 &   0.00308  \\ \hline
			Convergence order &	-	&	 0.99  &  0.99 \cellcolor[gray]{0.8}    & 0.99   &  0.95  &   0.80 \\ \hline
			CPU time (min) & 7 & 13 & \cellcolor[gray]{0.8} 26 & 53 & 106 & 212  \\ \hline
		\end{tabular}
			
\noindent
Elsey-Esedoglu kernel: $\epsilon = 0.1$ ($dx=0.00122$, $N=2^{13}$)\\
		\begin{tabular}{|l|r|r|r|r|r|r|} 
			\hline 
			Time step $\delta t$ & 0.12500 & 0.06250	&    0.03125  &   0.01562 & 0.00781&  \cellcolor[gray]{0.8} 0.00390   \\  \hline
		    $ L^{\infty}$-error &   0.34198 & 0.20097 &  0.13425 &  0.10275 &    0.08994 &  \cellcolor[gray]{0.8}  0.08240  \\ \hline
			Convergence order &	-	&		0.77  &  0.58  &  0.39   &  0.19  & \cellcolor[gray]{0.8}  0.13   \\ \hline
			CPU time (min) & 7 & 13 & 27 & 53 & 107 & \cellcolor[gray]{0.8} 213 \\ \hline
		\end{tabular}
			
\noindent
Elsey-Esedoglu kernel: $\epsilon = 0.05$ ($dx=0.00122$, $N=2^{13}$)\\
		\begin{tabular}{|l|r|r|r|r|r|r|} 
			\hline 
			Time step $\delta t$ & 0.12500 & 0.06250	&    0.03125  &   0.01562 & 0.00781& \cellcolor[gray]{0.8}  0.00390   \\  \hline
		    $ L^{\infty}$-error &    0.36138 &  0.17987 &  0.09401 & 0.05003 & 0.02953 & \cellcolor[gray]{0.8} 0.02281   \\ \hline
			Convergence order    &	-	&		1.01   & 0.94 &0.91 & 0.76 & \cellcolor[gray]{0.8} 0.37 \\ \hline
			CPU time (min)& 8 & 18 & 36 & 70 & 133 &\cellcolor[gray]{0.8}  265  \\ \hline
		\end{tabular}
\vspace{.08cm}\\	
\noindent
Elsey-Esedoglu kernel: $\epsilon = 0.01$ ($dx=0.00122$, $N=2^{13}$)\\
		\begin{tabular}{|l|r|r|r|r|r|r|} 
			\hline 
			Time step $\delta t$ & 0.12500 & 0.06250	&    0.03125  &   0.01562 & 0.00781& \cellcolor[gray]{0.8}  0.00390   \\  \hline
		    $ L^{\infty}$-error & 0.56472 & 0.25302 & 0.08831 & 0.01503 & 0.01067 & \cellcolor[gray]{0.8}  0.01866   \\ \hline
			Convergence order    &	-	&	1.15   & 1.51 & 2.55 & 0.49 & \cellcolor[gray]{0.8}  -0.80 \\ \hline
			CPU time (min)& 10 & 20 & 41 & 84 & 166 &\cellcolor[gray]{0.8}    329 \\ \hline
		\end{tabular}
	\end{center}
	\caption{Results of numerical tests for selected mesh sizes $dx$ and corresponding number of grid points $N$ in one spatial direction. Highlighted columns show results for the common value of $\delta t = 0.0039$.}
	\label{tab_errors}
	\end{table}
\begin{figure}
\centering{
	\includegraphics[width=.48\linewidth]{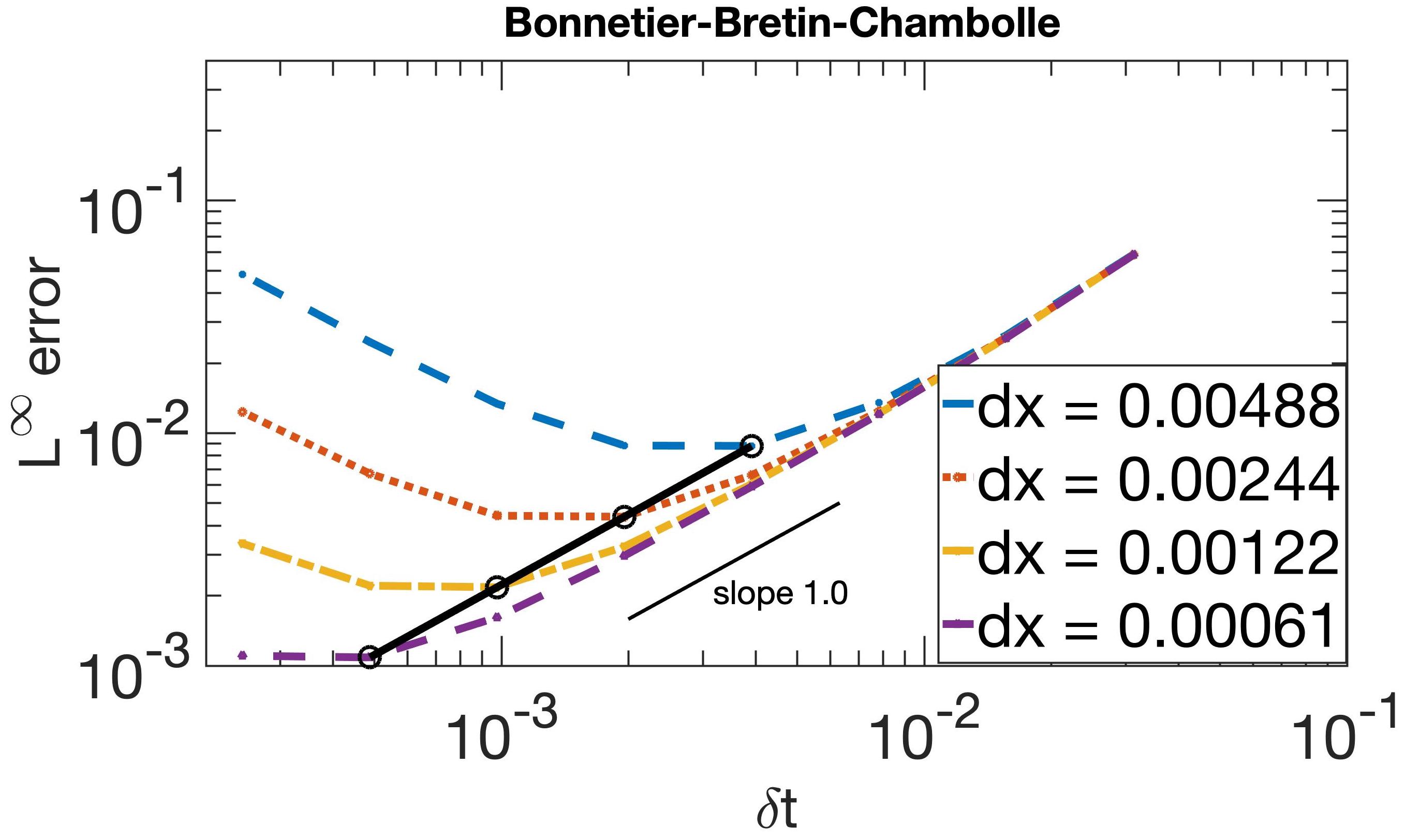}\hfill
	\includegraphics[width=.48\linewidth]{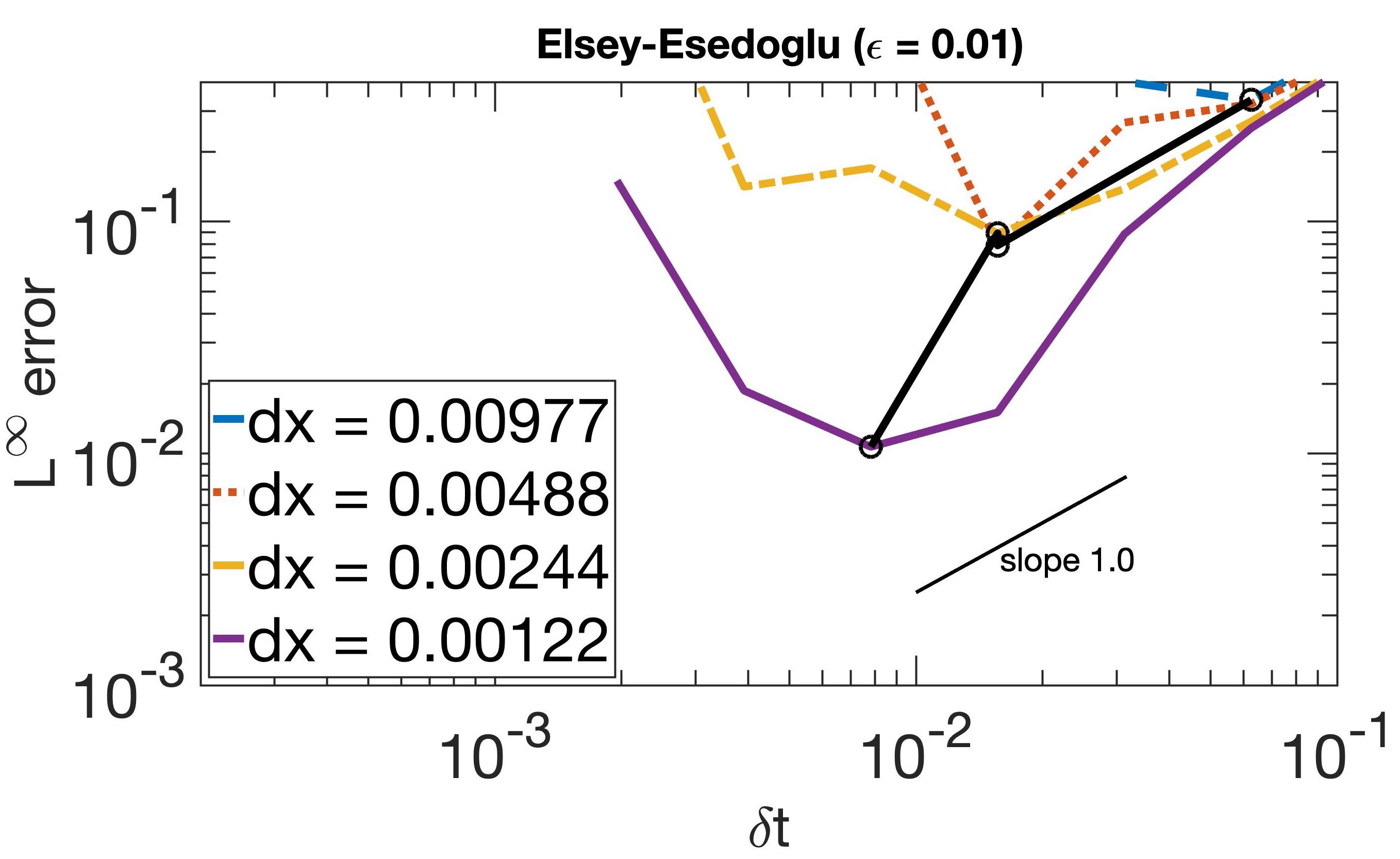} \hfill
	\includegraphics[width=.48\linewidth]{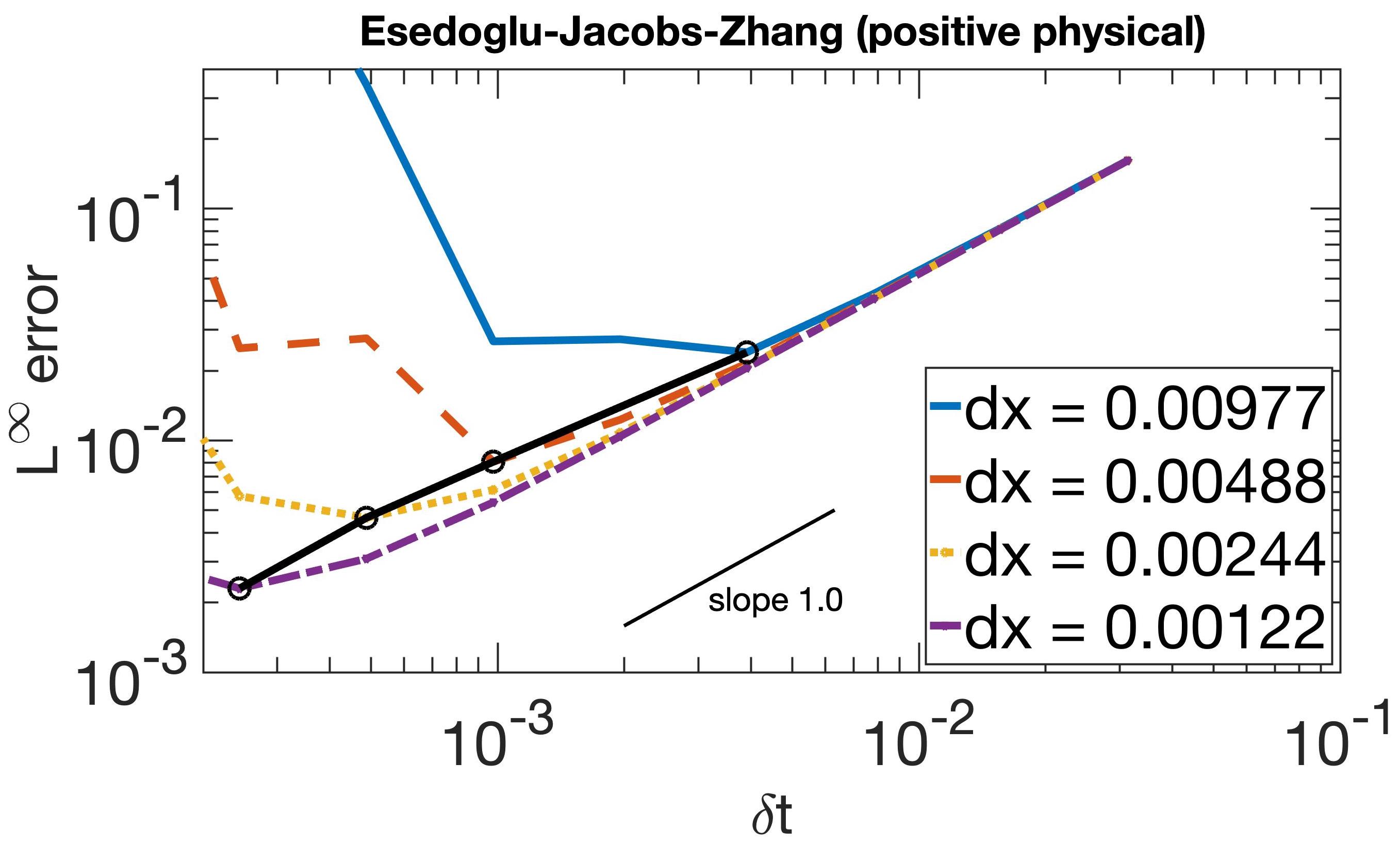}\hfill
	\includegraphics[width=.48\linewidth]{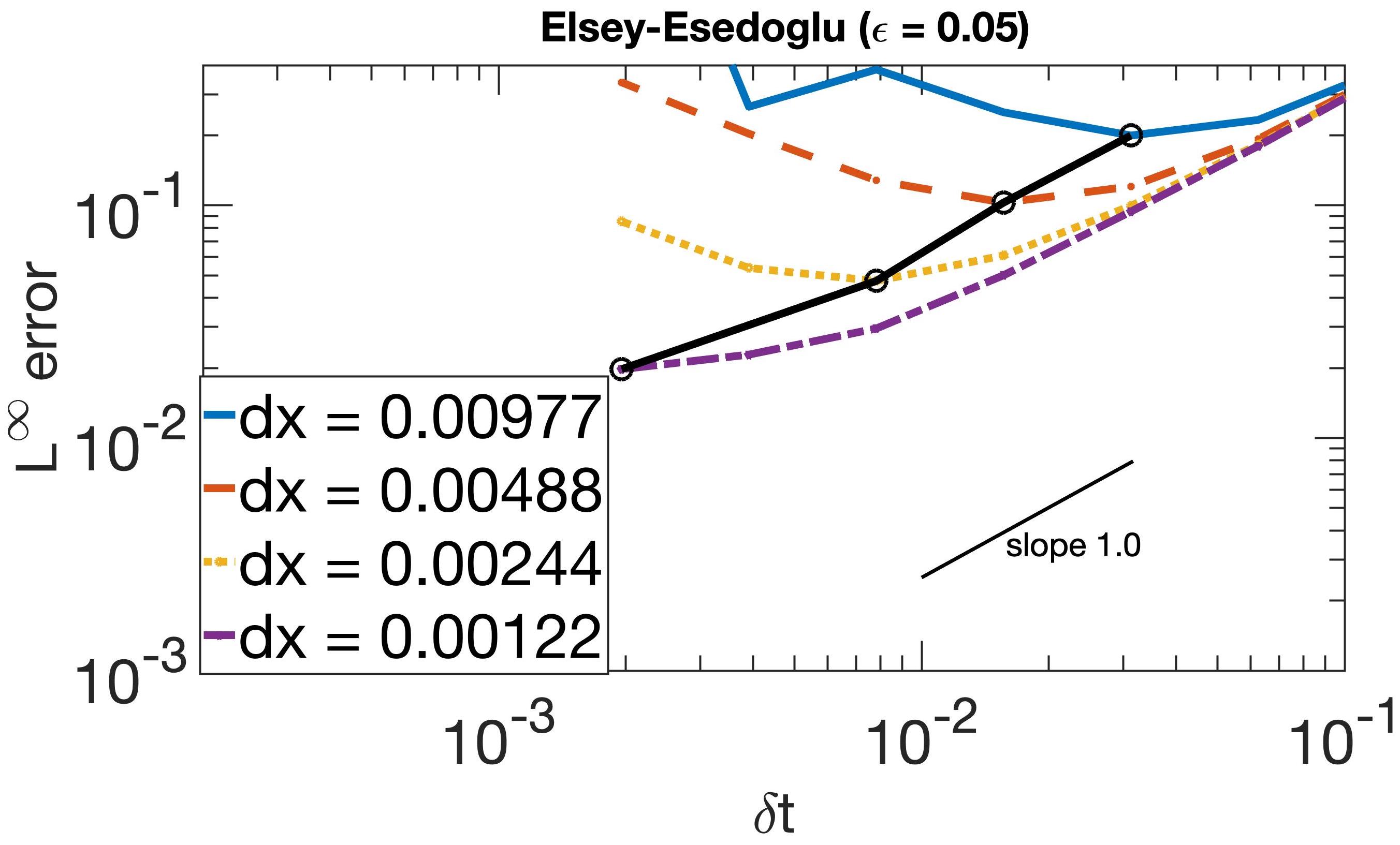} \hfill
	\includegraphics[width=.48\linewidth]{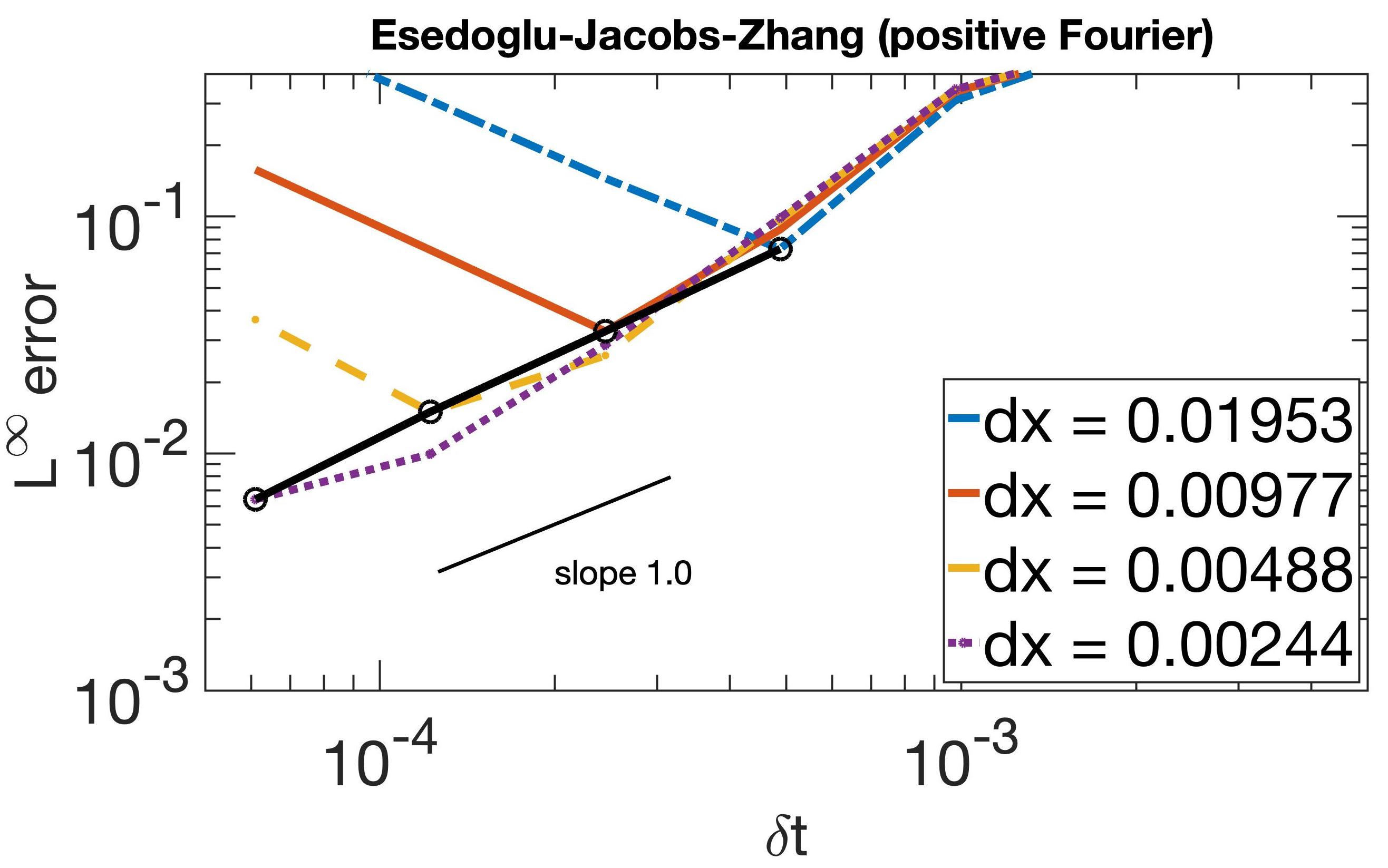}  \hfill
	\includegraphics[width=.31\linewidth]{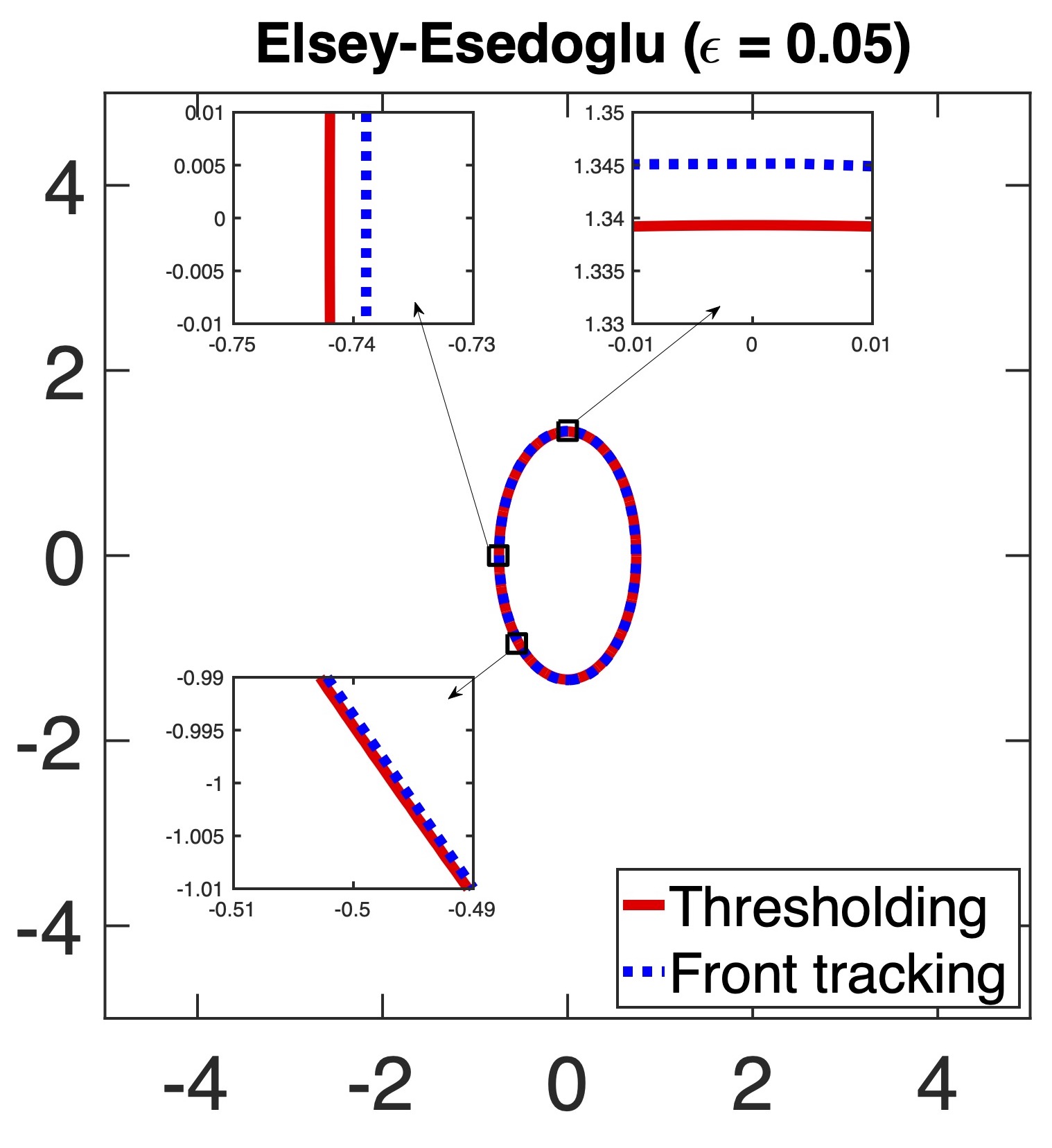} 
	\caption{Log-log plots of the dependence of numerical error on time step $\delta t$ for various mesh refinements $dx$ and for each kernel. Black points indicate the optimal $dx$-$\delta t$ pair for each value of $dx$. Figure on bottom right depicts the comparison with front-tracking solution at half-time to extinction.}
	\label{fig_convanal}}
\end{figure}

The error is taken in the $L_t^{\infty}(L_x^1)$-norm, i.e., at each time step we compute the spatial error as the $L^1$-norm of the difference of the subgrid-accurate characteristic function of the numerical solution and the subgrid-accurate characteristic function of the analytical solution, and take the maximum of this spatial error over all time steps until half-time to extinction $t= \frac{1}{4}$. Since the error tends to increase as time progresses, this maximum is usually attained at this final half-time.
The exact solution is known in analytical form for the self-similar case with mobility $\mu = \gamma$, which is available for both the BBC and EJZ kernels.
However, for EE kernels the mobility is given by \eqref{eq_mobee} and analytical solution is not known in closed form.
Therefore, we implemented a front-tracking scheme with automatic point redistribution developed in \cite{Sevcovic2011} to obtain an accurate approximation of the analytical solution (see Figure \ref{fig_convanal}, bottom right). Moreover, extinction time changes due to the different mobility, hence to get comparable results, we measured the error again at approximately half-time to extinction. 
The resulting errors, convergence rates and CPU times are presented in Table \ref{tab_errors} and Figure \ref{fig_convanal}.
{{This and all subsequent computations were}} done using Matlab software on 4 cores of 3.7GHz Intel Xeon E5 processors of a Mac Pro workstation. 
Since we only aim at relative comparison of the kernels, we did not attempt any code optimization, so the CPU times reflect the straightforward implementation of the algorithm.
Inspection of the results leads to the conclusion that the thresholding scheme possesses the expected first order convergence in time irrespective of which kernel is used.
This is observed by following the error for optimal time step $\delta t$ corresponding to a given mesh size $dx$ (black circles in Figure \ref{fig_convanal}). 
Otherwise the error dependence has a typical V-shape with errors increasing for time steps of order significantly smaller than the spatial grid size (see \cite{Misiats2016} for details).
We also note that it is computationally more efficient to have an explicit form of Fourier transform of the kernel since it saves one FFT calculation in \eqref{eq:fft}.
Nevertheless, there are a few differences between the performance of the kernels. 
For well-behaved anisotropies like the elliptic one, BBC and EJZ (positive in physical domain) kernels perform somewhat better than EJZ (positive in Fourier domain) and EE kernels. 
This can partially be observed by following the highlighted columns in Table \ref{tab_errors}, which show results obtained for time step $\delta t = 0.0039$. One confirms that decreasing value of regularizing parameter $\epsilon$ in EE kernels leads to improvement in error. Furthermore, BBC kernel yields the least error for a given $\delta t$.
Regarding EJZ kernels, the version positive in physical domain is effective with respect to CPU time.
On the other hand, since the constant $c$ in \eqref{ab2} decreased the actual mobility of EJZ kernel (positive in Fourier domain), attaining a given error required smaller time step $\delta t$, overriding this kernel's advantage of having explicit form of Fourier transform. 
The convergence order of EE kernel strongly depends on the regularization parameter $\epsilon$, namely, depending on the size of $\epsilon$ there is a value in time step $\delta t$ below which the convergence order starts deteriorating.
In addition, construction of EE kernels requires computation of a convolution at each spatial grid point which is expensive. 
In Table \ref{tab_errors}, we excluded the time needed to construct each kernel from CPU time.
We conclude that BBC kernel is superior to other kernels from the viewpoint of error, CPU time, time required to construct the kernel and regular behavior of convergence order.
\subsubsection{Other numerical tests}
Here we report on additional tests regarding the kernels' capability of dealing with sharp corners and non-convex initial conditions.

To investigate the behavior at corners, we chose the crystalline anisotropy $\gamma (x,y) = |x| + |y|$, whose Wulff shape is a square, and evolved the initial condition given as a circle by area-preserving flow, until no change was observed in the solution between subsequent time steps. Area preservation is obtained by suitably adjusting the thresholding height, in the same way as in Algorithm 2 below.
Since EJZ kernels require smooth anisotropy, the regularization $\gamma_{\epsilon} (x,y) = \sqrt{\epsilon^2 + x^2} + \sqrt{\epsilon^2 + y^2}$ with $\epsilon=0.01$ was used. 
The results are depicted in Figure \ref{fig:crystal} from two perspectives: figures on the left and in the center show the approximation of the square Wulff shape by each kernel for a fixed $dx$--$\delta t$ pair ($dx$ is the same while $\delta t$ is larger in the figure on the left) and figure on the right shows the best obtained result by each kernel for a fixed value of $dx$ (thus $\delta t$ varies according to the kernel).
One observes that BBC kernel tends to smooth out sharp corners excessively, while EE and EJZ kernels give comparable results with EE kernel showing good agreement with analytical solution when regularising parameter $\epsilon \rightarrow 0$, as already mentioned in \cite{Elsey2018}. 
The fact that a better result was obtained for EJZ kernel compared to its sibling, BBC kernel, was expected since the Fourier transform of BBC kernel for the crystalline anisotropy is singular at the origin, while this is remedied in the construction of EJZ kernel.
One can also notice that refinement of time step $\delta t$ does not necessarily lead to a better result since the interface may "get stuck" for small $\delta t$'s, exactly in the same manner as in the analysis of Figure \ref{fig_convanal}. The optimal $\delta t$ is kernel-dependent as seen in Figure \ref{fig:crystal} for BBC and EE ($\epsilon =0.01$) kernels.

\begin{figure}[h!t]
\centering{
\includegraphics[width=.30\textwidth]{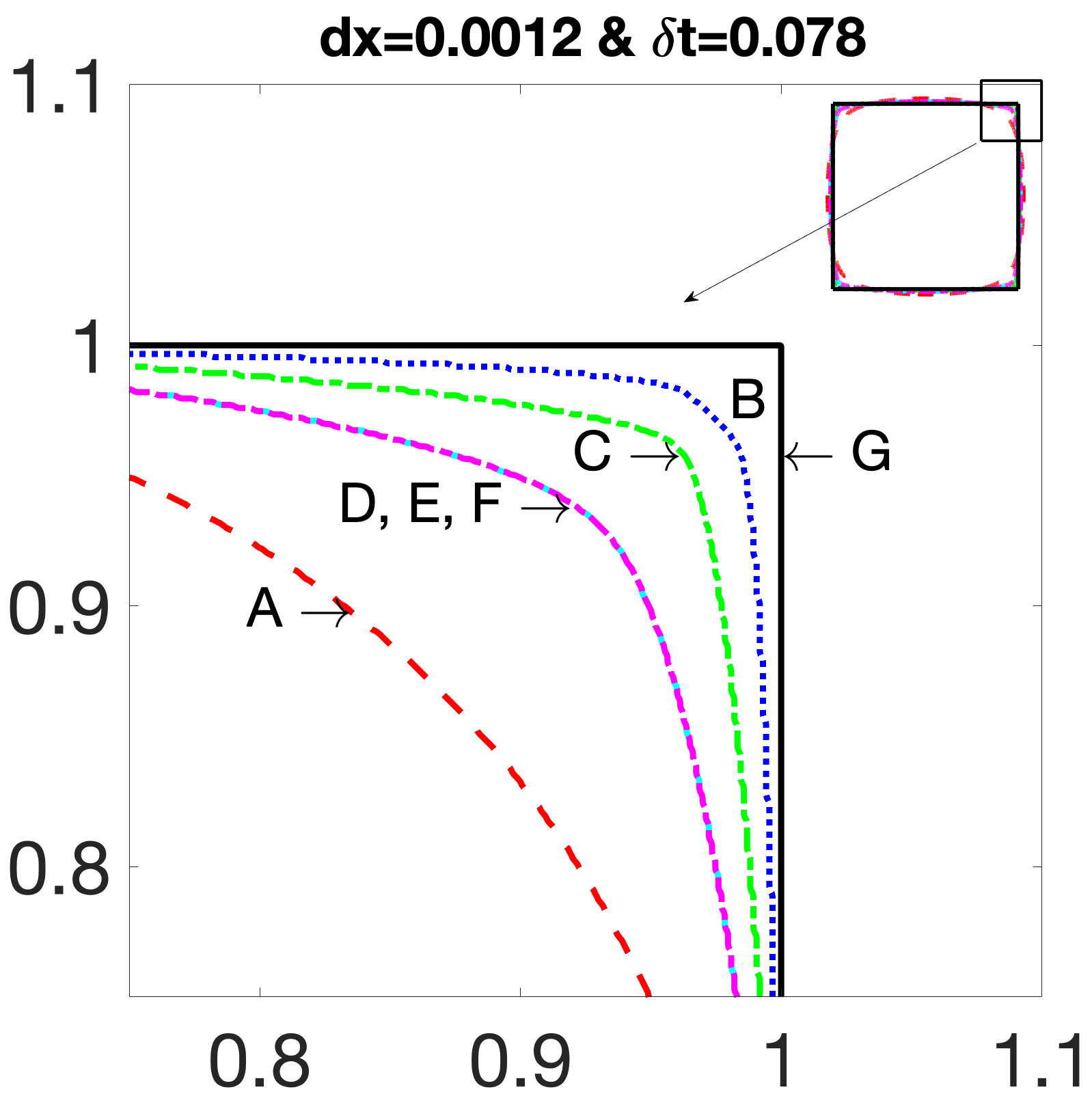}
\includegraphics[width=.32\textwidth]{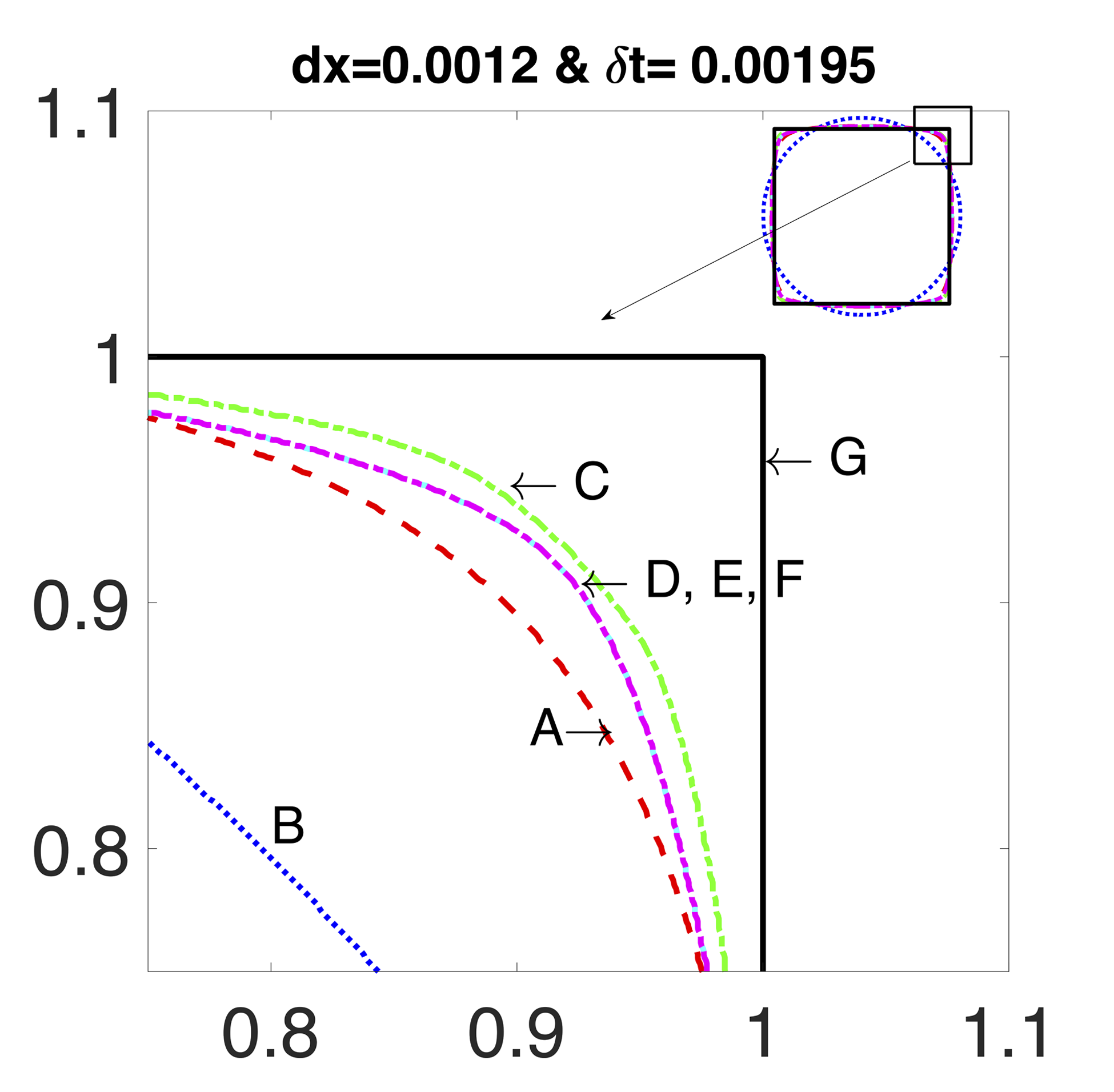}
\includegraphics[width=.32\textwidth]{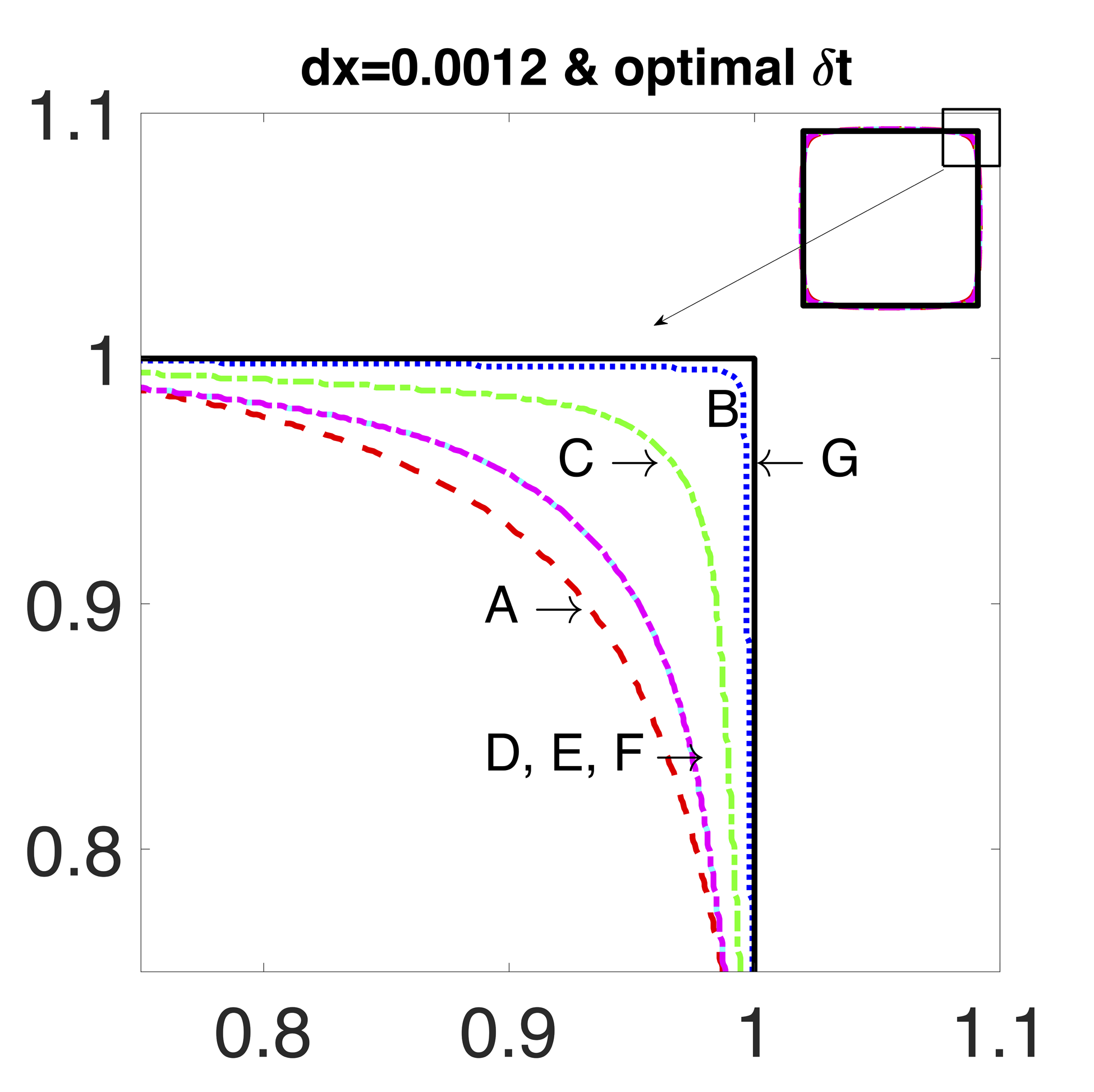}
\caption{Capability of thresholding kernels to approximate crystalline Wulff shapes. \textbf{A}: BBC kernel, \textbf{B}: EE kernel ($\epsilon=0.01$), \textbf{C}: EE kernel ($\epsilon=0.05$), \textbf{D}: EE kernel ($\epsilon=0.1$), \textbf{E}: EJZ kernel (positive in Fourier domain), \textbf{F}: EJZ kernel (positive in physical domain), \textbf{G}: Exact solution.}
\label{fig:crystal}}
\end{figure}
\begin{figure}[h!t]
\centering{
\includegraphics[width=.32\textwidth]{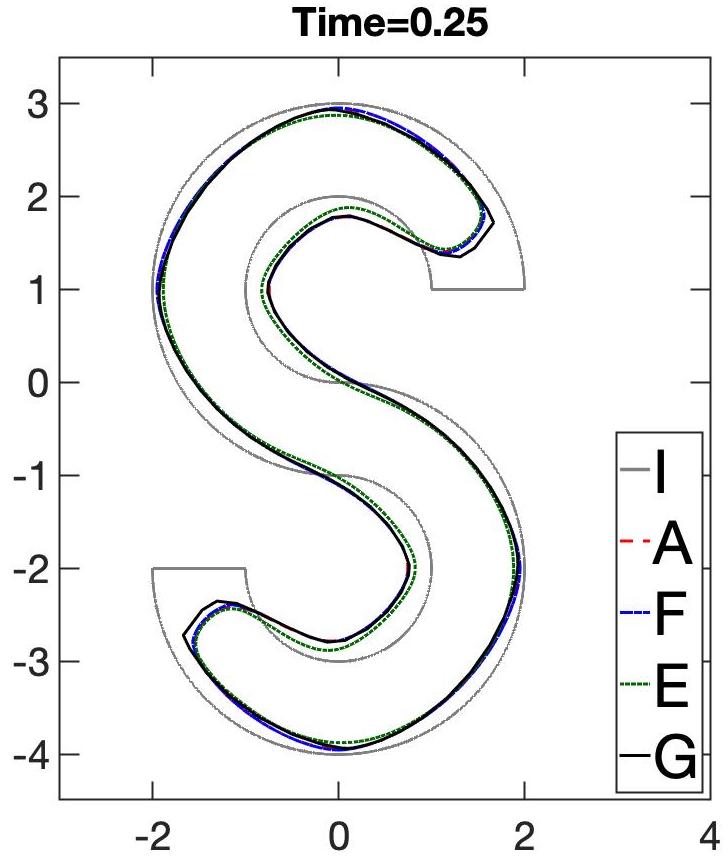}
\includegraphics[width=.32\textwidth]{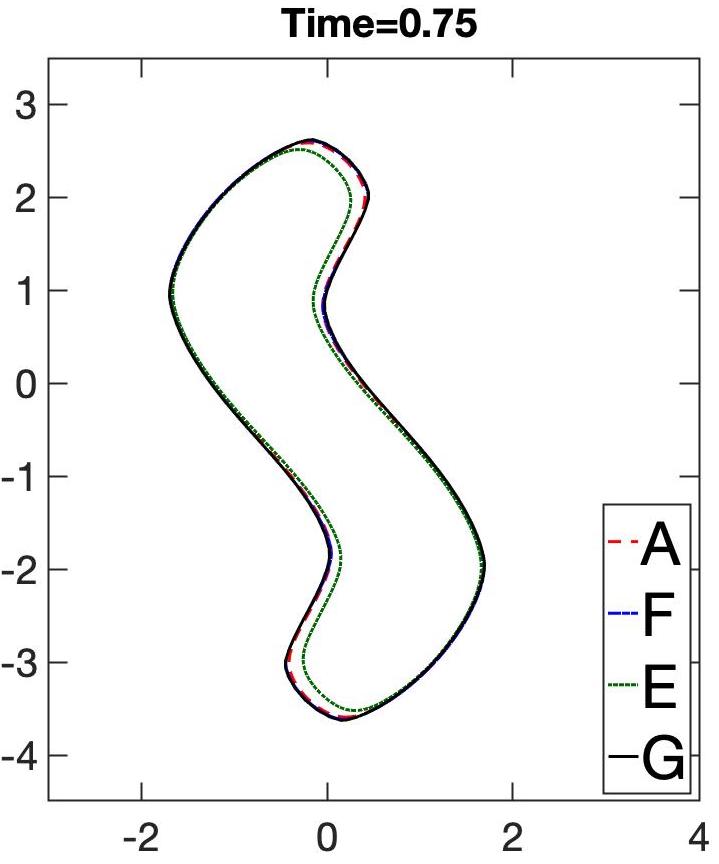}
\includegraphics[width=.32\textwidth]{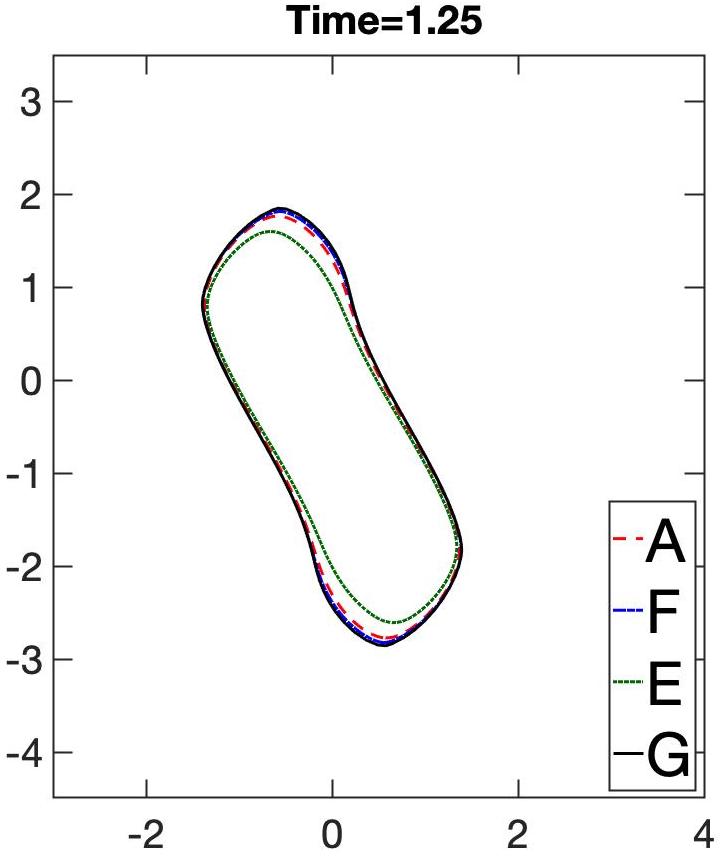}
\caption{Results of applying the thresholding method to S-shaped nonconvex initial condition. Here $dx = 0.0012$ and $\delta t =  0.000488$. \textbf{A}: BBC kernel, \textbf{E}: EJZ kernel (positive in Fourier domain), \textbf{F}: EJZ kernel (positive in physical domain), \textbf{G}: Front tracking solution, \textbf{I}: Initial shape.}
\label{fig:S shape}}
\end{figure}

The behavior of the thresholding method with nonconvex initial conditions was tested on the four-fold anisotropy $\gamma (\theta) = 1+ 0.05 \cos 4\theta$ with natural mobility $\mu = \gamma$, starting with an S-shaped initial condition shown in Figure \ref{fig:S shape} on the left.
Numerical solution was compared against an accurate approximation of the analytical solution obtained by the anisotropic front-tracking method with automatic point redistribution \cite{Sevcovic2011}.
Since mobility cannot be prescribed for EE kernels, we tested only BBC and EJZ kernels, which yielded a good match with the front-tracking solution {obtained by discretizing the curve into 102 points}, differences being caused mainly by smoothening out of sharp corners by the kernels (see Figure \ref{fig:S shape}). 

\section{Threshold dynamics with obstacle}

In this section we develop and analyze a thresholding algorithm for motion of anisotropic interfaces on obstacles, with the view of application to the problem of particles evolving on substrates.

\subsection{Derivation and stability}

First we derive the thresholding algorithm and prove its stability.
We closely follow the ideas in \cite{Esedoglu2015} and \cite{Xu2017} for the isotropic case.
The setup of the problem is the same as in Section \ref{sec_model}: A particle occupying a region $P$ is evolving within a domain $\Omega$ on a flat substrate given by the region $S$.
We replace $\mathbb{R}^d$ by a domain $\Omega$ which is sufficiently large but bounded, for two reasons: to avoid infinite value of energy and to make numerical implementation feasible. 
According to \eqref{eq:limlya}, the total energy \eqref{eq:energy} can be approximated by
\begin{equation}
\label{eq:aproxener}
E_{\delta t} (P) = \frac{1}{\sqrt\delta t}  \int_{\Omega}^{} (
\one_{P} K_{\delta t} \ast \one_{V}  + 	\gamma_{SP} \one_{P} G_{\delta t} \ast \one_{S} + 	\gamma_{SV} \one_{S} G_{\delta t} \ast \one_{V})\, dx,
\end{equation}	
where $K$ is a suitable kernel representing the  anisotropy $\gamma_{PV}$. The function $K$ is assumed to be symmetric, sufficiently smooth and (integrally) positive definite on $\Omega$, i.e., \begin{equation*}
\int_{\Omega} K(x,y)u(x)u(y)\, dx \, dy \geq 0 \qquad \text{for any } \; u\in L^1(\Omega).
\end{equation*} 
Recall that all kernels introduced in Section \ref{sec_aniker} satisfy these conditions if the Wulff shape corresponding to the anisotropy $\gamma = \gamma_{PV}$ is centrally symmetric, convex and smooth.
Since  the surface tensions  $\gamma_{SP}$ and $\gamma_{SV}$ are constant, Gaussian kernel $G_{\delta t} (x) = \frac{1}{(4 \pi \delta t)^{d/2}} e^{\frac{- |x|^{2}}{4 \delta t}} $ is used for the approximation of the corresponding surface energies \cite{BMO1994}.
Note that since the substrate does not deform, the evolution of the vapor region $V$ is fully determined by that of the particle region $P$, which is why the approximate energy is written only as a function of $P$. In the sequel, this dependence will be equivalently expressed by the corresponding characteristic functions: $u=\one_P$, $\one_V=\one_{\Omega^\text{up}}-u$, i.e., we will write $E(u)$ to mean $E(P)$, where $u=\one_P$. Recall that $\Omega^\text{up}=\overline{\Omega \setminus S}$ is the part of the domain $\Omega$ occupied by particle and vapor.

We wish to minimize \eqref{eq:aproxener} among characteristic functions $u=\one_P$ with a given integral $\int_{\Omega} \one_P \, dx =A$, which form a non-convex set. In the following lemma, this is relaxed to a mathematically more amenable convex constraint, by allowing $u$ to take any value between $0$ and $1$.

\begin{lem}
	\label{lemma_1}
	If the kernel $K$ is smooth and positive definite, $\mathcal{L} (u) $ is a linear functional and $\alpha$, $\beta$ are non-negative real numbers, then
	to minimize $\alpha E_{\delta t} (u) + \beta \mathcal{L} (u)$ over the non-convex set
	$$ \mathcal{B} = \{ u  \in BV (\Omega^{\text{up}}) | \; u (x)  \in \{0,1\} \; \text{a.e.} \; x \in \Omega^\text{up}, \; \int_{\Omega^\text{up}} u\, dx = A  \} $$
	is equivalent to the minimization of the same functional over the convex set
	$$ 	\mathcal{K} = \{ u \in BV (\Omega^{\text{up}}) | \; u (x) \in [0,1] \; \text{a.e.} \; x \in \Omega^\text{up}, \; \int_{\Omega^\text{up}} u\, dx = A  \}. $$
	{{Here $BV$ denotes the space of functions with bounded variation.}}
\end{lem}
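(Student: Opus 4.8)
The plan is to exploit the quadratic, positive-definite structure of $E_{\delta t}$ to show that the objective $F(u) := \alpha E_{\delta t}(u) + \beta\mathcal{L}(u)$ is \emph{concave} on $\mathcal{K}$, and then to invoke the general principle that a concave functional on a convex set attains its minimum at an extreme point of that set. The proof then reduces to identifying the extreme points of $\mathcal{K}$ with the characteristic functions making up $\mathcal{B}$. Since $\mathcal{B}\subset\mathcal{K}$ gives $\inf_\mathcal{K} F \le \inf_\mathcal{B} F$ for free, the entire content is the reverse inequality, which the extreme-point argument supplies by producing a minimizer that already lies in $\mathcal{B}$.

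First I would substitute $\one_V = \one_{\Omega^{\text{up}}} - u$ (keeping $\one_S$ fixed) into \eqref{eq:aproxener} and separate the terms by their dependence on $u$. The two terms carrying $\gamma_{SP}$ and $\gamma_{SV}$ are \emph{affine} in $u$, because $\one_S$ does not move. The only genuinely nonlinear contribution comes from the first term, which expands as
\[
\frac{1}{\sqrt{\delta t}}\int_{\Omega} u\,\big(K_{\delta t}\ast(\one_{\Omega^{\text{up}}}-u)\big)\,dx
= \frac{1}{\sqrt{\delta t}}\int_{\Omega} u\,\big(K_{\delta t}\ast\one_{\Omega^{\text{up}}}\big)\,dx
- \frac{1}{\sqrt{\delta t}}\int_{\Omega} u\,\big(K_{\delta t}\ast u\big)\,dx .
\]
The first piece is again affine, while the second is a nonnegative quadratic form in $u$ precisely because $K$ is (integrally) positive definite; hence its negative is concave. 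Adding the affine terms and the linear functional $\beta\mathcal{L}$, and using $\alpha\ge 0$, shows that $F$ is concave on the convex set $\mathcal{K}$.

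Next I would characterize the extreme points of $\mathcal{K}$. If $u\in\mathcal{K}$ is a characteristic function, it is extreme: wherever $u=1$, any representation $u=\tfrac12(v+w)$ with $v,w\in[0,1]$ forces $v=w=1$, and similarly $v=w=0$ where $u=0$. Conversely, if $\{0<u<1\}$ has positive measure, then $\{\varepsilon<u<1-\varepsilon\}$ has positive measure for some $\varepsilon>0$; splitting it into two disjoint subsets $E_1,E_2$ of equal measure and setting $\phi=\delta(\one_{E_1}-\one_{E_2})$ with $0<\delta<\varepsilon$ produces a mean-zero perturbation with $u\pm\phi\in\mathcal{K}$ and $u+\phi\neq u-\phi$, so $u$ is not extreme. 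Thus the extreme points of $\mathcal{K}$ are exactly the elements of $\mathcal{B}$. It remains to combine concavity with this description: along any admissible segment $t\mapsto F(u+t\phi)$ the function is concave, so its minimum over the segment sits at an endpoint; pushing $u$ toward $\partial\mathcal{K}$ in this way — or, more directly, invoking Bauer's minimum principle that a concave, lower-semicontinuous functional on a compact convex set attains its minimum at an extreme point — yields a minimizer lying in $\mathcal{B}$ with no larger energy than at $u$. Hence $\inf_\mathcal{K} F=\min_\mathcal{B} F$ and the two minimization problems are equivalent.

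The step I expect to be the main obstacle is the infinite-dimensional bookkeeping behind the extreme-point principle. One must first secure existence of a minimizer over $\mathcal{K}$, via weak-$*$ compactness of the $L^\infty$-bounded, convex, constraint-closed set $\mathcal{K}$ together with continuity of $F$ (which holds for fixed $\delta t>0$, since $K_{\delta t},G_{\delta t}$ are smooth and bounded, so the convolution terms pass to the limit). One must then justify the passage from ``energy cannot increase along each admissible segment'' to ``there is an honest characteristic-function minimizer.'' The cleanest route is Bauer's principle, but its hypotheses — compactness in a topology rendering $F$ lower-semicontinuous, and the coincidence of extreme points with $\mathcal{B}$ under the additional linear constraint $\int_{\Omega^{\text{up}}}u\,dx=A$ — have to be verified; alternatively one argues directly by iteratively removing the fractional region $\{0<u<1\}$ while preserving both the constraint (mean-zero $\phi$) and the energy (concavity). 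All remaining computations, in particular the explicit separation of affine versus quadratic terms, are routine.
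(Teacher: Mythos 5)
Your proposal is correct and follows essentially the same route as the paper: the paper likewise exploits the (integral) positive definiteness of $K$ to make the quadratic part strictly concave and perturbs a putative fractional minimizer by $t(\one_{Z_1}-\one_{Z_2})$ on two equal-measure subsets of $\{c<u<1-c\}$, showing the second derivative of the objective along that segment is negative. The only difference is packaging -- the paper avoids Bauer's principle and its compactness/extreme-point bookkeeping by computing this second derivative directly at a minimizer of the relaxed problem (exactly your ``direct alternative''), which sidesteps the issue you flag of whether extreme points of the weak-$*$ closure remain in $BV$.
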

\begin{proof}
The existence of minimizers in both $\mathcal{B}$ and $\mathcal{K}$ can be proved by the direct method when the kernel $K$ is smooth.
Since $\mathcal{B} \subset \mathcal{K}$, it is enough to prove that if $\widetilde{u} \in \mathcal{K}$ is a minimizer of $ \alpha E_{\delta t} (u) + \beta \mathcal{L} (u)$ in $\mathcal{K}$ then $\widetilde{u} \in  \mathcal{B}$. 
This is clear when $\alpha = 0$ since the minimizer of a linear functional over a convex set must belong to the boundary of the set. 

For $\alpha > 0$, we use contradiction. Assuming $\widetilde{u} \not \in  \mathcal{B}$, we deduce the existence of a measurable set $Z \subset \Omega^{\text{up}}$ with positive measure and of a constant $c \in (0, \frac{1}{2})$ such that 
$$ 0 < c < \widetilde{u}(x), \;\; \one_{\Omega^\text{up}}(x) - \widetilde{u}(x) < 1-c <1 \qquad \forall x \in Z. $$
We partition the set $Z$ into two disjoint subsets $Z_1$ and $Z_2$ of equal measure $|Z|/2$, and define $u^t = \widetilde{u} + t \one_{Z_1} - t \one_{Z_2}$.
Note that $\int_{\Omega} u^t \, dx = \int_{\Omega} \widetilde{u} \, dx = A$, and that $0 \leq u^t \leq 1$ holds for $t \in (0,c)$, so $u^t \in\mathcal{K}$ for such $t$. Hence, direct computation yields
\begin{multline*}
	\frac{d^2}{dt^2} \left( \alpha E_{\delta t} (u^t) + \beta \mathcal{L} (u^t) \right) \\ =  \frac{\alpha}{\sqrt{\delta t}} \int_{\Omega^\text{up}} \frac{d}{dt} u^t K_{\delta t} \ast  \frac{d}{dt} (-u^t)\, dx
	=   \frac{-\alpha}{\sqrt{\delta t}} \int_{\Omega^\text{up}} (\one_{Z_1} - \one_{Z_2}) K_{\delta t} \ast (\one_{Z_1} - \one_{Z_2}) \, dx.
\end{multline*}
Due to positive definiteness of the kernel $K$, this value is negative, which implies that $\widetilde{u}=u^t|_{t=0}$ cannot be a minimizer. 
\end{proof}

The above lemma implies that, the minimization of \eqref{eq:aproxener} 
can be done over the relaxed set $\mathcal{K}$ without changing the result.
We solve this minimization problem by iterations. Assume we have an approximation $u^k \in \mathcal{B}$, where $k$ is step number during iteration. 
Energy functional $E_{\delta t} (u)$ linearized at the point $u^k$ reads
$$  E_{\delta t} (u) = E_{\delta t} (u^k) + \mathcal{L} (u -u^k, u^k) + \text{higher order terms},$$
where
\begin{multline*}
\mathcal{L} (v, u^k ) = \frac{1}{\sqrt{\delta t}}  \int_{\Omega^\text{up}} v \left( K_{\delta t} \ast (\one_{\Omega^\text{up}}-u^k ) + \gamma_{SP} G_{\delta t} \ast  \one_{S} \right) \, dx \\ 
- \frac{1}{\sqrt{\delta t}} \int_{\Omega^\text{up}} v \left( K_{\delta t} \ast  u^k + \gamma_{SV} G_{\delta t} \ast  \one_{S} \right) \, dx.
\end{multline*}
The idea then is to minimize the linearized functional:
\begin{eqnarray}
\label{eq_lindf}
\min_{u \in \mathcal{K}} \mathcal{L} (u, u^k),
\end{eqnarray} 
and take the minimizer as an improved approximation $u^{k+1}$ to the minimizer of $E_{\delta t}$. 
Lemma \ref{lemma_1} guarantees  that the solution of \eqref{eq_lindf} belongs to $\mathcal{B}$. 
The following lemma shows that \eqref{eq_lindf} can be solved by a simple thresholding. 
\begin{lem}
	\label{lemma_2_algorithm}
	\normalfont
		Write $\mathcal{L}(v,u^k)$ as $\int_{\Omega^{\text{up}}} v \phi^k \, dx$, where
		\begin{equation*}
		\phi^k(x) = \frac{1}{\sqrt{\delta t}} \left( K_{\delta t} \ast (\one_{\Omega^\text{up}}-2u^k ) + (\gamma_{SP} -\gamma_{SV}) G_{\delta t} \ast  \one_{S}   \right),
		\end{equation*}
	and let
	$$ P^{k+1}= \{ x \in \Omega^\text{up} | \; \phi^k(x) < \delta^k  \}  \quad  \text{and} \quad V^{k+1} = \Omega^\text{up} \setminus P^{k+1},	$$ 
	where $\delta^k$ is chosen in such a way that the area of the particle is preserved, i.e., so that $\int_{\Omega^\text{up}} \one_{P^{k+1}} \, dx = A$. Then $u^{k+1} = \one_{P^{k+1}}$ is a solution to \eqref{eq_lindf}.
\end{lem}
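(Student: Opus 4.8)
The plan is to recognize \eqref{eq_lindf} as the minimization of a \emph{linear} functional $\mathcal{L}(u,u^k)=\int_{\Omega^{\text{up}}} u\,\phi^k\,dx$ over the convex set $\mathcal{K}$ of functions with values in $[0,1]$ and prescribed integral $A$, and to solve it by the classical bathtub principle: to make $\int u\,\phi^k$ as small as possible one places the available mass $A$ on the region where $\phi^k$ is smallest. First I would verify that the integrand in $\mathcal{L}(v,u^k)$ collapses to exactly the stated $\phi^k$, by grouping the two convolution terms $K_{\delta t}\ast(\one_{\Omega^{\text{up}}}-u^k)$ and $-K_{\delta t}\ast u^k$ into $K_{\delta t}\ast(\one_{\Omega^{\text{up}}}-2u^k)$, combining the two Gaussian terms into $(\gamma_{SP}-\gamma_{SV})\,G_{\delta t}\ast\one_S$, and scaling by $1/\sqrt{\delta t}$.

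The core of the argument is a pointwise comparison. Let $u^{k+1}=\one_{P^{k+1}}$ with $P^{k+1}=\{\phi^k<\delta^k\}$ and let $u\in\mathcal{K}$ be arbitrary. Because both functions have integral $A$, I may subtract the constant $\delta^k$ for free and write $\mathcal{L}(u,u^k)-\mathcal{L}(u^{k+1},u^k)=\int_{\Omega^{\text{up}}}(u-u^{k+1})(\phi^k-\delta^k)\,dx$. I would then split $\Omega^{\text{up}}$ into $P^{k+1}$ and its complement. On $P^{k+1}$ one has $u^{k+1}=1$, so $u-u^{k+1}\le 0$, while $\phi^k-\delta^k<0$; on the complement $u^{k+1}=0$, so $u-u^{k+1}\ge 0$, while $\phi^k-\delta^k\ge 0$. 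In both regions the product is nonnegative, hence the integral is $\ge 0$ and $u^{k+1}$ is a minimizer. This shows the thresholded set solves \eqref{eq_lindf}, and Lemma~\ref{lemma_1} simultaneously guarantees that restricting competitors to $\{0,1\}$-valued functions costs nothing.

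The step needing the most care is the existence of a threshold $\delta^k$ with $|P^{k+1}|=A$ exactly. This amounts to showing that the distribution function $\delta\mapsto|\{x\in\Omega^{\text{up}}:\phi^k(x)<\delta\}|$ attains the value $A$. Since $\phi^k$ is a convolution of bounded functions with the smooth kernels $K_{\delta t}$ and $G_{\delta t}$, it is smooth, so I would argue that its level sets $\{\phi^k=\delta\}$ have zero Lebesgue measure for all but at most countably many $\delta$; the distribution function is then continuous, and ranging from $0$ to $|\Omega^{\text{up}}|\ge A$ it attains $A$ by the intermediate value theorem. The genuinely delicate case is when the chosen level set $\{\phi^k=\delta^k\}$ carries positive measure: then a pure characteristic function cannot match the area exactly, and one must place a fractional value on this level set to absorb the surplus. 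I expect this to be the main obstacle, and I would handle it by noting that the pointwise inequality above holds with \emph{equality} on $\{\phi^k=\delta^k\}$, so any such mass assignment is equally optimal; under the smoothness of $K$ assumed in the lemma this degeneracy is nongeneric, and the clean statement with $u^{k+1}=\one_{P^{k+1}}$ holds.
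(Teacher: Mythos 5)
Your proof is correct, and the core inequality is the same bathtub-principle argument as in the paper, but you package it differently. The paper first invokes Lemma \ref{lemma_1} to reduce the competition to characteristic functions $u=\one_R\in\mathcal{B}$, and then compares $\mathcal{L}(u^{k+1},u^k)$ with $\mathcal{L}(u,u^k)$ via the symmetric-difference sets $D_1=R\setminus P^{k+1}$ and $D_2=P^{k+1}\setminus R$, using $|D_1|=|D_2|$ and the sign of $\phi^k-\delta^k$ on each. You instead subtract the constant $\delta^k$ (legitimate since all competitors have integral $A$) and run a pointwise sign argument for $(u-u^{k+1})(\phi^k-\delta^k)$ valid for \emph{every} $u\in\mathcal{K}$, which makes the lemma self-contained: you get minimality over the full convex set directly, without needing Lemma \ref{lemma_1} at all for this step (the paper needs it to pass from $\mathcal{K}$ to $\mathcal{B}$). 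You also address two points the paper silently assumes: that a threshold $\delta^k$ achieving $|P^{k+1}|=A$ exists (via continuity of the distribution function of $\phi^k$ and the intermediate value theorem), and what happens when the level set $\{\phi^k=\delta^k\}$ has positive measure, where your observation that equality holds there shows any tie-breaking is equally optimal. These additions are genuine value; the only caveat is that in the truly degenerate case a pure characteristic function with exact area $A$ need not exist, so the clean statement requires either the nondegeneracy you invoke or a measurable selection on the critical level set, which is consistent with how the paper's numerical implementation (sorting grid values and taking the first $M$) resolves ties in practice.
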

\begin{proof}
We wish to prove
$ \mathcal{L} (u^{k+1}, u^k ) = \min_{u \in \mathcal{K}} \mathcal{L} (u, u^k)$, which is, by Lemma \ref{lemma_1}, equivalent to
\begin{eqnarray*}
	\mathcal{L} (u^{k+1}, u^k)  \leq \mathcal{L} (u, u^k) \qquad \forall u \in \mathcal{B} .
\end{eqnarray*}
Every element of $\mathcal{B}$ looks like $u = \one_{R}$ for some open set $R \subset \Omega$ such that $|R| = A$. 
Denote $D_1 = R \setminus P^{k+1}$ and $D_2 = P^{k+1} \setminus R$. Due to the area constraint, $D_1$ and $D_2$ satisfy $|D_1| = |D_2|$. Note that
\begin{eqnarray*}
  && \phi^k (x) \geq  \delta^k \quad \forall x \in D_1 \subset \Omega^\text{up}\setminus P^{k+1} \qquad \text{ and}\\
  && \phi^k (x) <  \delta^k \quad \forall x \in D_2 \subset P^{k+1} .
\end{eqnarray*}
Using these inequalities we calculate
\begin{multline*}
	\mathcal{L}(u^{k+1}, u^k)  - \mathcal{L}(u, u^k) =  \int_{\Omega^\text{up}}  (u^{k+1} - u) \phi^k   \, dx
	= \int_{D_1} (-\phi^k) \, dx + \int_{D_2} \phi^k \, dx\\
	\leq \int_{D_1} (-\delta^k)\, dx + \int_{D_2}  \delta^k\, dx 
	= \delta^k (-|D_1| + |D_2|) = 0,
\end{multline*}
reaching the desired conclusion.
\end{proof}

We arrive at the thresholding Algorithm 2. 
	\begin{algorithm}[ht]
		\caption{Evolution of anisotropic particle on substrate}
					Given phases $P^k$, $V^k \subset \Omega^\text{up }$ at time $t_k = k\cdot \delta t$, to get new phases $P^{k+1}$ and $V^{k+1}$ at next time step $t_{k+1} = (k+1 )\delta t$, perform the following two steps:
					\begin{eqnarray*}
					&&\text{Convolution :}	 \quad \phi^k(x) = \tfrac{1}{\sqrt{\delta t}} \left( K_{\delta t} \ast (\one_{\Omega^\text{up}}-2u^k ) + (\gamma_{SP} -\gamma_{SV}) G_{\delta t} \ast  \one_{S}   \right), 
					\\
					&&\text{Thresholding:}	 \hspace*{0.5cm} P^{k+1}  =  \Big\{x \in \Omega^{\text{up}}|  \; \phi^{k} (x) < \delta^k \Big\},\;\; V^{k+1}  =  \Omega^\text{up}  \setminus P^{k+1}.
					\end{eqnarray*}
					Here $\delta^k$ is chosen {{so that the area of phase $P^{k+1}$ is  equal to the area $A$ of phase $P^k$}}.
	\end{algorithm}
	
Next we show that this algorithm is stable, that is, the total energy $E_{\delta t}(P^k)$ is a non-increasing function of $k$.
\begin{thm}\label{thm_stability}
		\normalfont
		Set $u^k =\one_{P^{k}}$ for $k = 0,1,2,...$, where $P^k$ is obtained in Algorithm 2. Then 
		\begin{eqnarray}
		E_{\delta t} (u^{k+1}) \leq E_{\delta t} (u^k) \qquad \text{for all } \; k = 0,1,2,\dots\;  \text{ and all } \; \delta t >0.
		\end{eqnarray}
\end{thm}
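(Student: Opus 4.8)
The plan is to exploit the fact that the relaxed energy $E_{\delta t}$ is an \emph{exactly quadratic, concave} functional of $u$, and to combine this with the minimizing property of the thresholding step already established in Lemma~\ref{lemma_2_algorithm}. The whole argument rests on turning the heuristic linearization $E_{\delta t}(u) = E_{\delta t}(u^k) + \mathcal{L}(u-u^k,u^k) + \text{h.o.t.}$ into an exact identity in which the remainder has a definite sign.

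First I would rewrite the energy \eqref{eq:aproxener}, viewed as a function of $u=\one_P$ through the substitution $\one_V = \one_{\Omega^{\text{up}}}-u$ with $\one_S$ held fixed, in the split form
$$ E_{\delta t}(u) = \ell(u) - \frac{1}{\sqrt{\delta t}} \int_{\Omega^{\text{up}}} u\, K_{\delta t} \ast u \, dx, $$
where $\ell$ gathers every term that is affine-linear in $u$ (all terms containing the fixed data $\one_S$, $G_{\delta t}$, $\gamma_{SP}$, $\gamma_{SV}$, together with the linear piece $u\,K_{\delta t}\ast\one_{\Omega^{\text{up}}}$). The only genuinely quadratic contribution is $-\tfrac{1}{\sqrt{\delta t}}\int u\,K_{\delta t}\ast u$. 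Using the symmetry of $K$, i.e. $\int u\,K_{\delta t}\ast v = \int v\,K_{\delta t}\ast u$, and the analogous identity for $G_{\delta t}$, I would verify that the first variation of $E_{\delta t}$ at $u^k$ is precisely the linear functional $\mathcal{L}(\cdot,u^k)$ appearing in Lemma~\ref{lemma_2_algorithm}.

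Because $E_{\delta t}$ is exactly quadratic, its second-order Taylor expansion about $u^k$ is an identity rather than an approximation: writing $w = u^{k+1}-u^k$,
$$ E_{\delta t}(u^{k+1}) = E_{\delta t}(u^k) + \mathcal{L}(u^{k+1}-u^k,\, u^k) - \frac{1}{\sqrt{\delta t}} \int_{\Omega^{\text{up}}} w\, K_{\delta t} \ast w \, dx. $$
I would then argue that both correction terms are nonpositive. The quadratic remainder satisfies $-\tfrac{1}{\sqrt{\delta t}}\int w\,K_{\delta t}\ast w \le 0$ by the integral positive definiteness of $K$ assumed in the construction of $E_{\delta t}$; this is exactly the concavity of $E_{\delta t}$. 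For the linear term, Lemma~\ref{lemma_2_algorithm} guarantees that $u^{k+1}$ minimizes $\mathcal{L}(\cdot,u^k)$ over $\mathcal{K}$, and since $u^k=\one_{P^k}\in\mathcal{B}\subset\mathcal{K}$ is itself an admissible competitor, minimality gives $\mathcal{L}(u^{k+1},u^k)\le\mathcal{L}(u^k,u^k)$; linearity of $\mathcal{L}$ in its first slot then yields $\mathcal{L}(u^{k+1}-u^k,u^k)\le 0$. Combining the two sign facts produces $E_{\delta t}(u^{k+1})\le E_{\delta t}(u^k)$, which is the claim.

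The main obstacle I anticipate is the bookkeeping in the first step: correctly separating the quadratic from the linear dependence on $u$ in \eqref{eq:aproxener}, and checking, via the kernel symmetries, that the first variation reproduces $\mathcal{L}(\cdot,u^k)$ \emph{and} that the quadratic remainder is exactly $-\tfrac{1}{\sqrt{\delta t}}\int w\,K_{\delta t}\ast w$ with no stray cross terms. Once this exact expansion is secured, the two nonpositivity observations — positive definiteness of $K$ and optimality from Lemma~\ref{lemma_2_algorithm} — close the argument at once. Notably, no smallness condition on $\delta t$ enters anywhere, which is precisely the mechanism behind the \emph{unconditional} stability asserted by the theorem.
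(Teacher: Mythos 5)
Your proposal is correct and is essentially the paper's own argument: the paper likewise combines the minimality $\mathcal{L}(u^{k+1},u^k)\le\mathcal{L}(u^k,u^k)$ from Lemma \ref{lemma_2_algorithm} with the integral positive definiteness of $K$, and its remainder term $\mathcal{Y}=-\tfrac{1}{\sqrt{\delta t}}\int(u^{k+1}-u^k)K_{\delta t}\ast(u^{k+1}-u^k)\,dx$ is exactly your quadratic Taylor remainder. Your presentation via the exact second-order expansion of the quadratic functional is just a cleaner bookkeeping of the same computation.
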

\begin{proof}
Definitions of $E_{\delta t}$ and $\mathcal{L}$ and Lemma \ref{lemma_2_algorithm} yield
\begin{equation}
E_{\delta t} (u^k ) - \frac{1}{\sqrt{\delta t}}\int_{\Omega^\text{up}} \left( u^k K_{\delta t} \ast u^k  + \gamma_{SV} G_{\delta t} \ast \one_S \right) \, dx  = \mathcal{L} (u^{k}, u^k)
 \geq \mathcal{L} (u^{k+1},u^k). \label{thmproof1}
\end{equation}
Furthermore,
\begin{multline}
 \mathcal{L} (u^{k+1},u^k) = E_{\delta t} (u^{k+1} ) - \frac{1}{\sqrt{\delta t}}     \int_{\Omega^\text{up}} \bigg( u^{k+1} K_{\delta t} \ast u^k  
 +  u^{k} K_{\delta t} \ast u^{k+1} \\ 
 -  u^{k+1} K_{\delta t} \ast u^{k+1} +\gamma_{SV} G_{\delta t} \ast \one_S \bigg) \, dx \label{thmproof2}
\end{multline}
Gathering \eqref{thmproof1} and \eqref{thmproof2} leads to
 \begin{equation}
 \label{thmproof3}
E_{\delta t} (u^{k+1})  \leq E_{\delta t}  (u^k)+ \mathcal{Y} ,
 \end{equation}
 where
 \begin{eqnarray}
 \nonumber
 \mathcal{Y}& =&   \frac{1}{\sqrt{\delta t}}  
  \int_{\Omega^\text{up}} \left( u^{k+1} K_{\delta t} \ast u^k  + u^{k} K_{\delta t} \ast u^{k+1}  -  u^{k+1} K_{\delta t} \ast u^{k+1} - u^{k} K_{\delta t} \ast u^k \right) \, dx \\ 
  \nonumber
 & = & - \frac{1}{\sqrt{\delta t}}    
 \int_{\Omega^\text{up}} (u^{k+1} -u^k) K_{\delta t} \ast (u^{k+1} -u^k) \, dx  \\
 &\leq& 0 .
 \label{thmproof4}
 \end{eqnarray}
 \eqref{thmproof3} and \eqref{thmproof4} finish the proof of the theorem.
\end{proof}
The convergence of Algorithm 2 to the $L^2$-gradient flow of the energy $E$ as $\delta t\to 0+$ was proved in \cite{Laux2016} 
in the isotropic setting without obstacle but remains open for the anisotropic case and for the problem with obstacle.

\subsection{Numerical tests}
\subsubsection{Convergence analysis}
\label{sec:particletestorig}
\begin{figure}
\centering{
	\includegraphics[width=.48\linewidth]{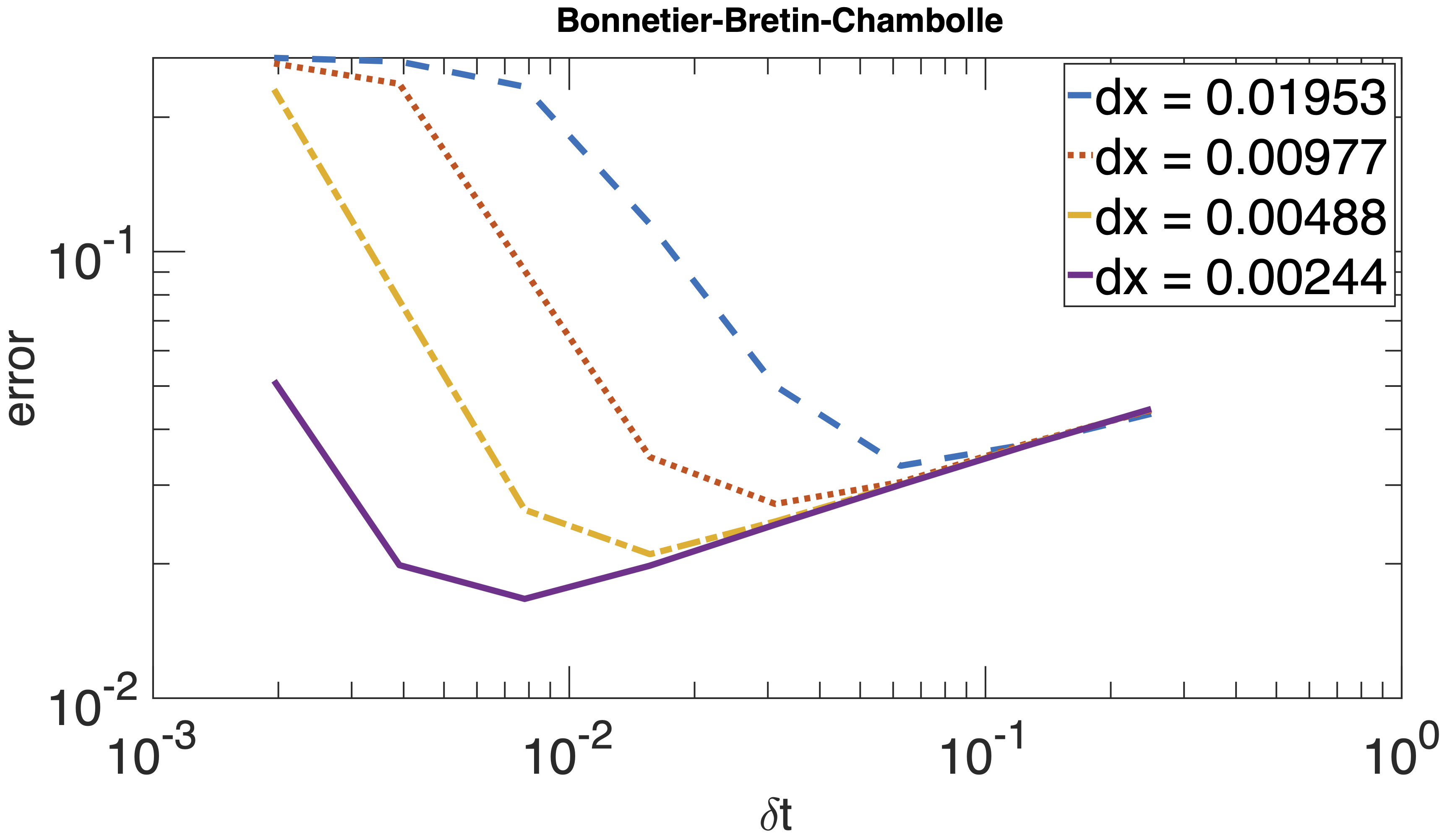}\hfill
	\includegraphics[width=.48\linewidth]{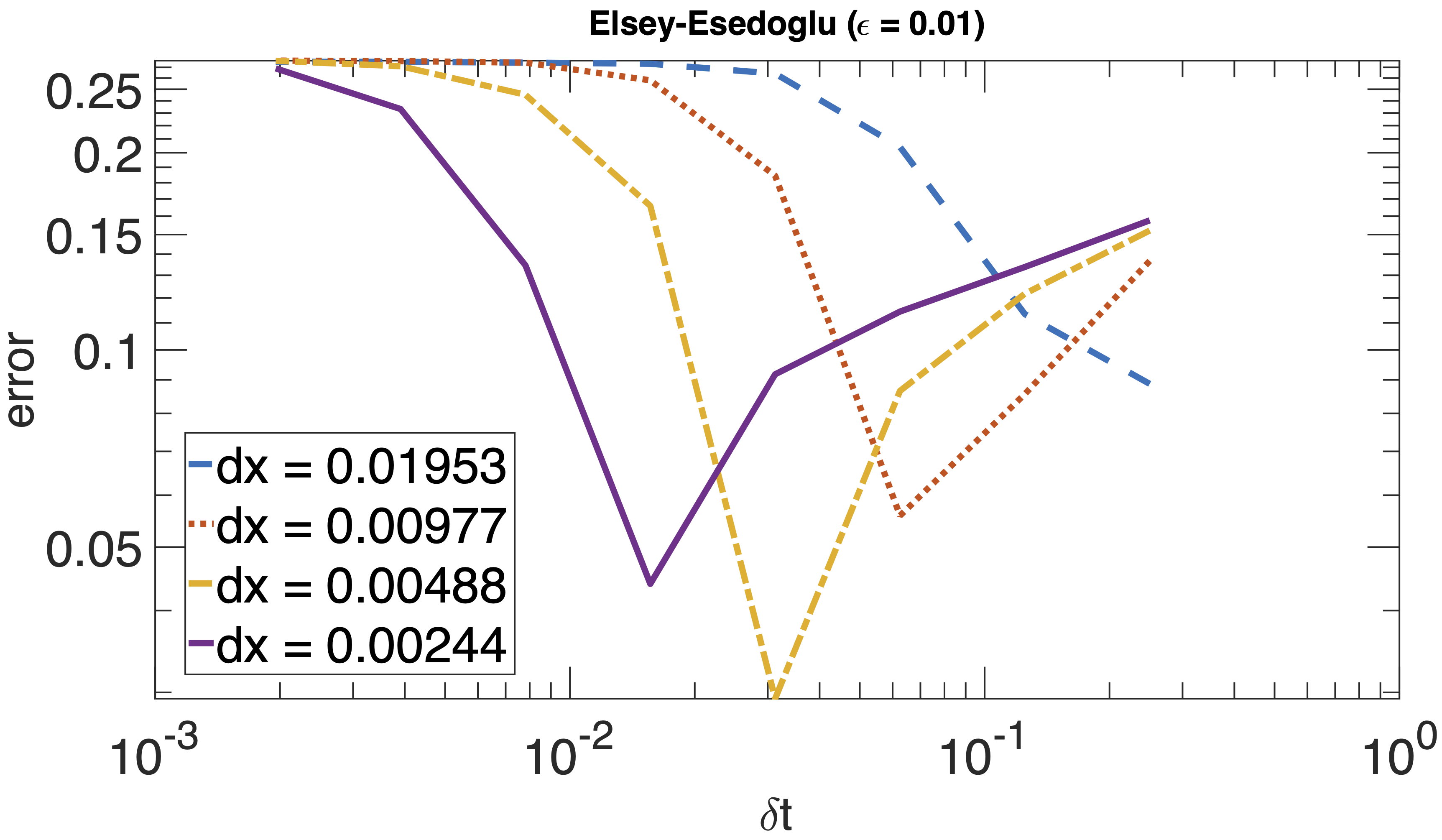}\hfill
	\includegraphics[width=.48\linewidth]{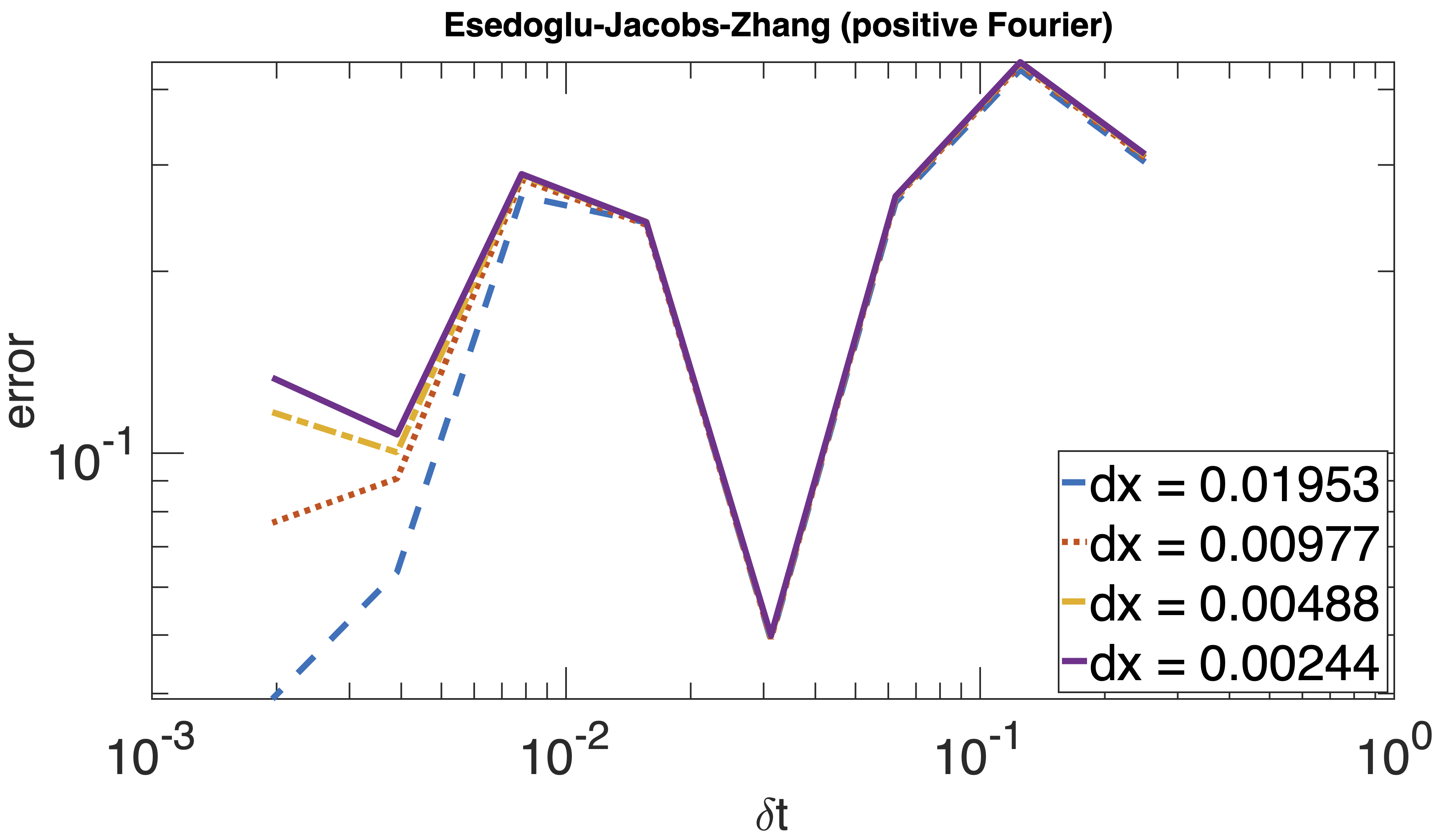}\hfill
	\includegraphics[width=.48\linewidth]{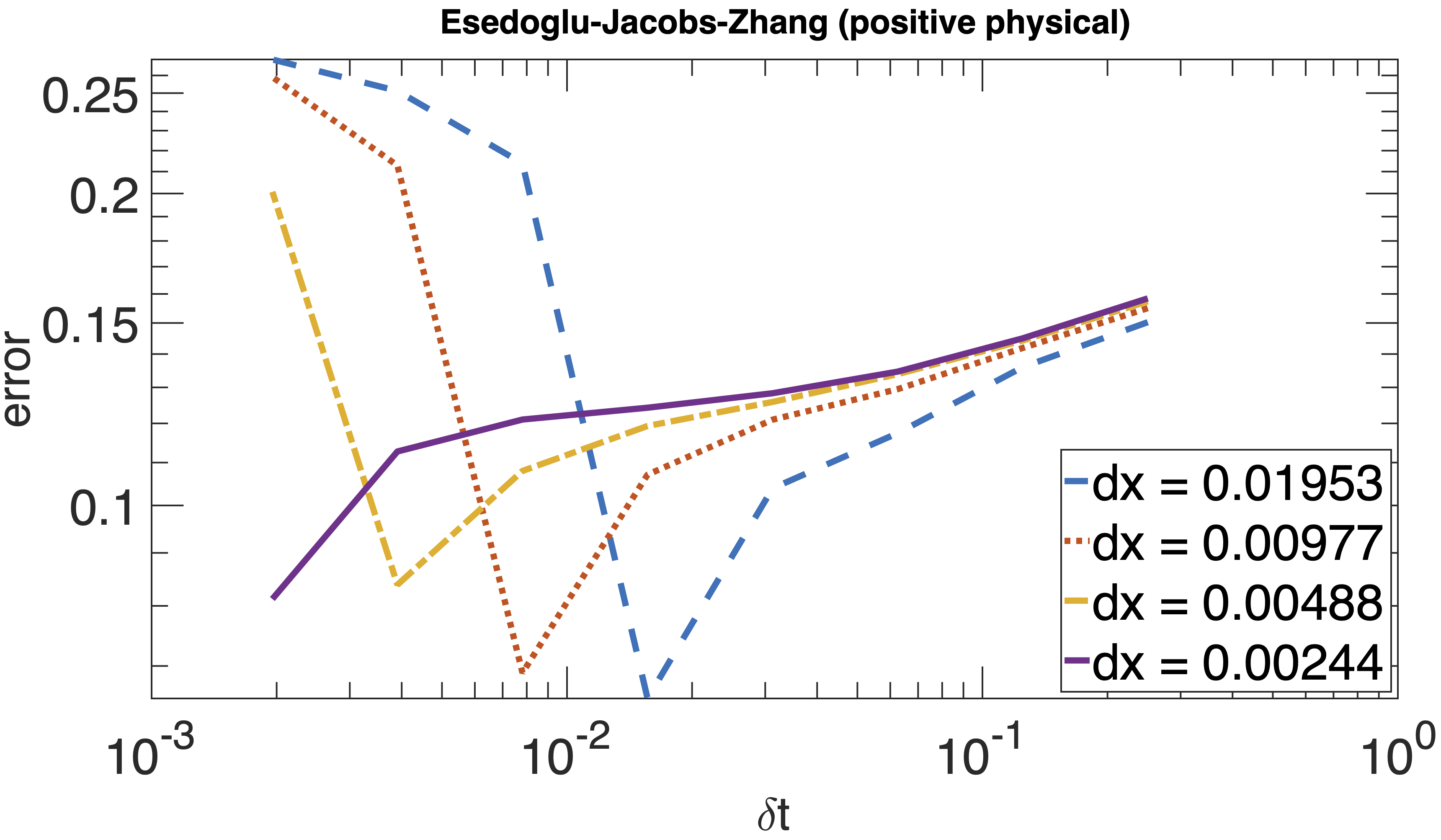}\hfill
	\includegraphics[width=.48\linewidth]{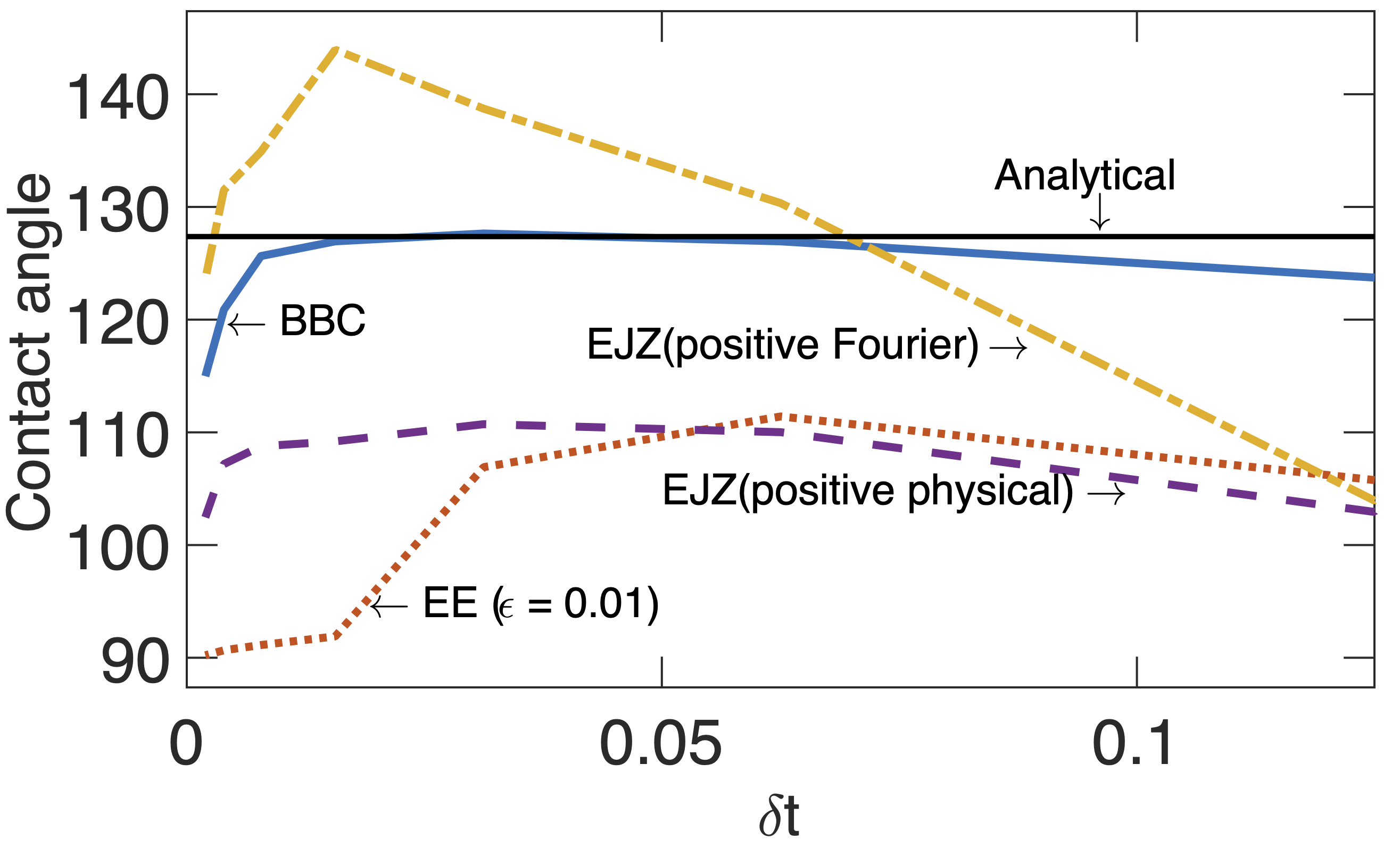} \hfill
	\includegraphics[width=.4\linewidth]{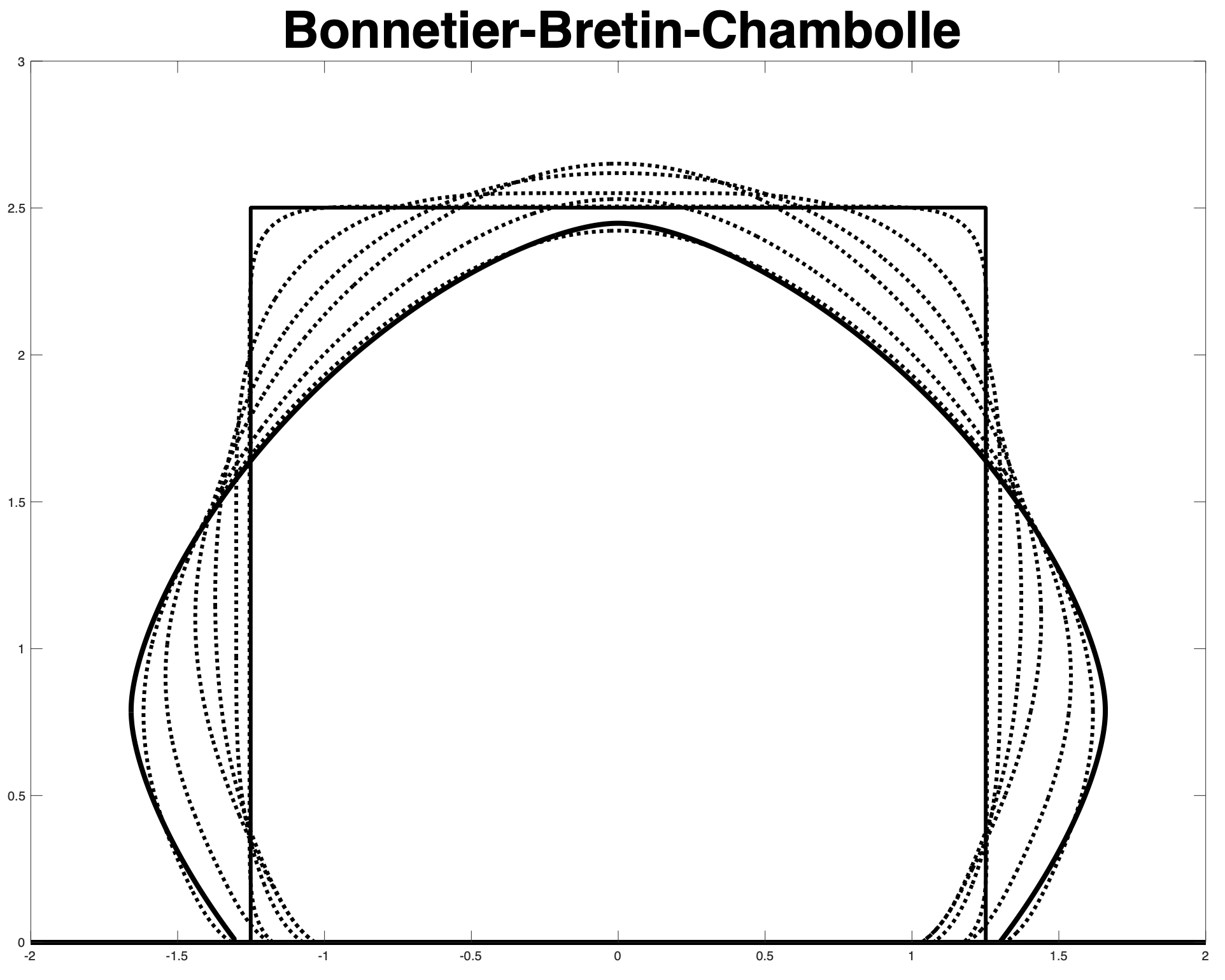} \hfill
	\caption{Evolving particle on substrate with anisotropies $\gamma = 1 + 0.05 \cos 4\theta$,  $\gamma_{SP}= 1.5$ and $\gamma_{SV}= 1$. Four figures on top show log-log plots of the dependence of relative error in shape on time step $\delta t$ for various spatial mesh resolutions $dx$. (Bottom left) Numerical contact angle versus time step $\delta t$ for {$dx=0.0098$}. (Bottom right) Evolution of particle by BBC kernel at times $0, 10\delta t, 25\delta t,  50\delta t,  192\delta t \text{ and }672\delta t$, for $dx=0.0024$ and $\delta t = 0.0078$; solid line represents analytical solution.}
\label{fig_particle_all_kernels}}
\end{figure} 

Evolving particle on substrate is a three-phase problem and thus a new issue that was not present in the two-phase setting of Section \ref{sec:twophase} is the realization of correct contact angles at the triple point where the three phases meet. 
Therefore, in numerical tests, besides the error measuring the discrepancy in the shape of the interface, we focus on the quality with which thresholding Algorithm 2 approximates the exact contact angles.
The evaluation is carried out based on the stationary solution of the area-preserving anisotropic mean curvature flow on obstacle, for which analytical solution can be obtained through Winterbottom construction \cite{Winterbottom1967} and contact angles by solving anisotropic Young equation \eqref{eq:aniyoung}.

The setup of the simulations is as follows. 
A flat substrate is positioned at height $y=0$ of the computational domain $\Omega =[-5,5]\times [-5,5]$ and the initial shape of the particle is the square $[-1.25,1.25]\times [0,2.5]$. The particle is equipped with four-fold anisotropy $\gamma (\theta) = 1+ 0.05 \cos 4\theta$, while the other anisotropies are set to $\gamma_{SP} = 1.5$ and $\gamma_{SV}=1$.
The corresponding stable shape of the particle is depicted at bottom right in Figure \ref{fig_particle_all_kernels}.
We take the mobility as $\mu=\gamma$, except for the EE kernel, where the mobility is determined by anisotropy.
The initial shape is evolved by Algorithm 2, which is expected to approximate the evolution laws \eqref{eq:firstvar} and \eqref{eq:aniyoung}, until it reaches the numerical stationary state, i.e., there is no change between two subsequent time steps.
Area preservation, i.e., determination of the value of $\delta^k$ in Algorithm 2, is implemented by sorting values of $\phi^k$ at grid points in increasing order and accepting first $M$ grid points in $P^{k+1}$, where $M$ is the number of grid points in the initial particle. This approach is simple and preserves number of grid points in $P^k$ exactly but is possible only thanks to fixing the substrate -- it would not work for a three-phase evolution, where all phases can deform.

We executed Algorithm 2 for each kernel and for several combinations of spatial mesh size $dx$ and time step $\delta t$, and measured the error as the area of the symmetric set difference of exact and numerical stationary solutions divided by the area of the particle. Furthermore, numerical contact angle was obtained by fitting a linear function to several points representing numerical interface near the substrate, excluding points in the immediate vicinity of the substrate. The dependence of resulting values of contact angles on the range of points selected for fitting is negligible, as long as the fitting points do not extend too far from the substrate. Since symmetry was preserved by the scheme, we present contact angle at the left triple point only. 

Figure \ref{fig_particle_all_kernels} shows the obtained results. Upon inspection, one sees that EJZ kernel (positive in physical domain) shows the weakest convergence response, both in overall shape and contact angle. We remark that the non-monotone jumpy behavior of the error may also be caused by the absence of subgrid accuracy improvement in these simulations. However, the irregular behavior of the EJZ error seems to be inherent to the kernel itself, 
as already hinted at in \cite{Esedoglu2017b}.
Surprisingly, the EE kernel exhibits systematic convergence but the size of the error is relatively large compared to other kernels, both in overall shape and contact angle, in spite of choosing its smoothing as $\epsilon=0.01$, which gave relatively good results in the two phase case. This could be due to oscillatory behavior observed for most of $dx$--$\delta t$ combinations in the numerical solution obtained by EE kernels near the stationary state.
Among the investigated kernels, the BBC kernel behaves in the most regular way. Moreover, BBC gives the least value of error among all the kernels and also the best agreement in contact angle.

\subsubsection{Modified algorithm to improve contact angle}
Proposition \ref{prop:stability} asserts that the thresholding algorithm always decreases total energy over time in the space-continuous setting. However, as was already pointed out in the numerical analysis of Section \ref{sec:twophase}, when space is discretized, smaller $\delta t$ does not necessarily entail improved quality of the approximation. 
In particular, for large values of $\delta t$, the approximation of evolution by the algorithm improves upon reducing $\delta t$ up to a certain threshold value, which depends on spatial mesh size $dx$ and which we call "optimal $dx$--$\delta t$ pair". Further decrease in $\delta t$ then leads to deteriorating error, and for sufficiently small $\delta t$ the evolution stagnates entirely. This is due to the fact that for excessively small $\delta t$ relative to mesh size $dx$, the interface is not able to proceed more than one spatial grid size in the diffusion step and the thresholding step returns it to its original position. As discovered in Section \ref{sec:conv2phase} the optimal $\delta t$ for a given $dx$ is of the order $dx$.
On the other hand, the dynamics near triple points is known to occur on the scale of $\sqrt{\delta t}$ \cite{Esedoglu2015}. 
Consequently, to get an accuracy of order $\delta t$ including triple points, one would need to take a spatial mesh of size $\delta t^2$, so that stagnation of interfaces is avoided.

This problem was addressed in \cite{Xu2017} by introducing time step scaling. 
The authors start with a relatively large time step $\delta t$ and compute the evolution until the interface does not evolve anymore. 
Then the time step is decreased, e.g., by halving, and the evolution, especially around triple points, is refined. 
Since the interfaces away from junctions are already in their right position when the time step halving starts, the stagnation phenomenon has smaller impact on the outcome.
We extend the time halving scheme to anisotropic energies, which leads to Algorithm 3, where the difference between two solutions at different time steps is evaluated based on a threshold $\tau >0$ for the area of symmetric set difference.

	\begin{algorithm}[ht]
		\caption{Anisotropic particle on substrate (modified for higher contact angle accuracy)}
					{{Given initial phases $P^0$, $V^0 \subset \Omega^\text{up }$, time step $\delta t^0$ and a threshold $\tau$, to generate phases $P^{k}$ and $V^{k}$ at time steps $t_{k+1} = t_{k} + \delta t^{k}$, $k = 0,1,2, \dots$, set $P^{*}=P^0$ and repeat the following steps:}}
					\begin{eqnarray*}
					&&\text{Convolution :}	 \quad \phi^k(x) = \tfrac{1}{\sqrt{\delta t^k}} \left( K_{\delta t^k} \ast (\one_{\Omega^\text{up}}-2u^k ) + (\gamma_{SP} -\gamma_{SV}) G_{\delta t^k} \ast  \one_{S}   \right), 
					\\
					&&\text{Thresholding:}	 \quad P^{k+1}  =  \Big\{x \in \Omega^{\text{up}}|  \; \phi^{k} (x) < \delta^k \Big\},\;\; V^{k+1}  =  \Omega^\text{up}  \setminus P^{k+1}.\\
					&&\text{Time scaling: }	  \cdot \text{ If }  |P^{k} - P^{k+1}| > \tau, {{\text{set} \; \delta t^{k+1} = \delta t^k \; }} \text{and go to next convolution step.}\\
				 	&& \hspace*{2.7cm} \cdot \text{ If }  |P^{k} - P^{k+1}| \leq \tau  \text{ and } |P^{\ast} - P^{k+1}| \geq \tau, \text{ set } \delta t^{k+1} = \delta t^k/2, P^{\ast} = P^{k+1},\\
				 	&& \hspace*{3.7cm}  \text{ and go to next convolution step.} \\
				 		&& \hspace*{2.7cm} \cdot \text{ If }  |P^{k} - P^{k+1}| \leq \tau  \text{ and } |P^{\ast} - P^{k+1}| < \tau, \text{ terminate. } \end{eqnarray*}
Here $\delta^k$ is chosen {{so that the area of phase $P^{k+1}$ is  equal to the area $A$ of phase $P^0$}}.
	\end{algorithm}

For numerical tests, we keep the setup and method of error measurement of previous subsection, except for the surface energies, which are chosen to pose a challenge to the numerical scheme as $\gamma (\theta) = 1 + 0.05 \cos(4\theta + 8)$, $\gamma_{SP}= 1$ and $\gamma_{SV} = 1.1$. The stable shape, shown in Figure \ref{fig:Tilted}, is then a tilted four-fold Wulff shape with one of its corners close to the substrate.
Since the kernels tend to smooth out corners, this is expected to lead to a badly resolved contact angle.
The choice of threshold $\tau$ has an impact on the final result. Our analysis shows that moderately small values of $\tau$ relative to the total area of the particle perform better, so in the simulations here we picked $\tau = 0.0001A$, where $A$ is the particle's area.
Further decrease in $\tau$ leads to improved contact angles but the resolution quality of particle's overall shape deteriorates.

\begin{figure}[htbp]
\centering{
	\includegraphics[width=.48\linewidth]{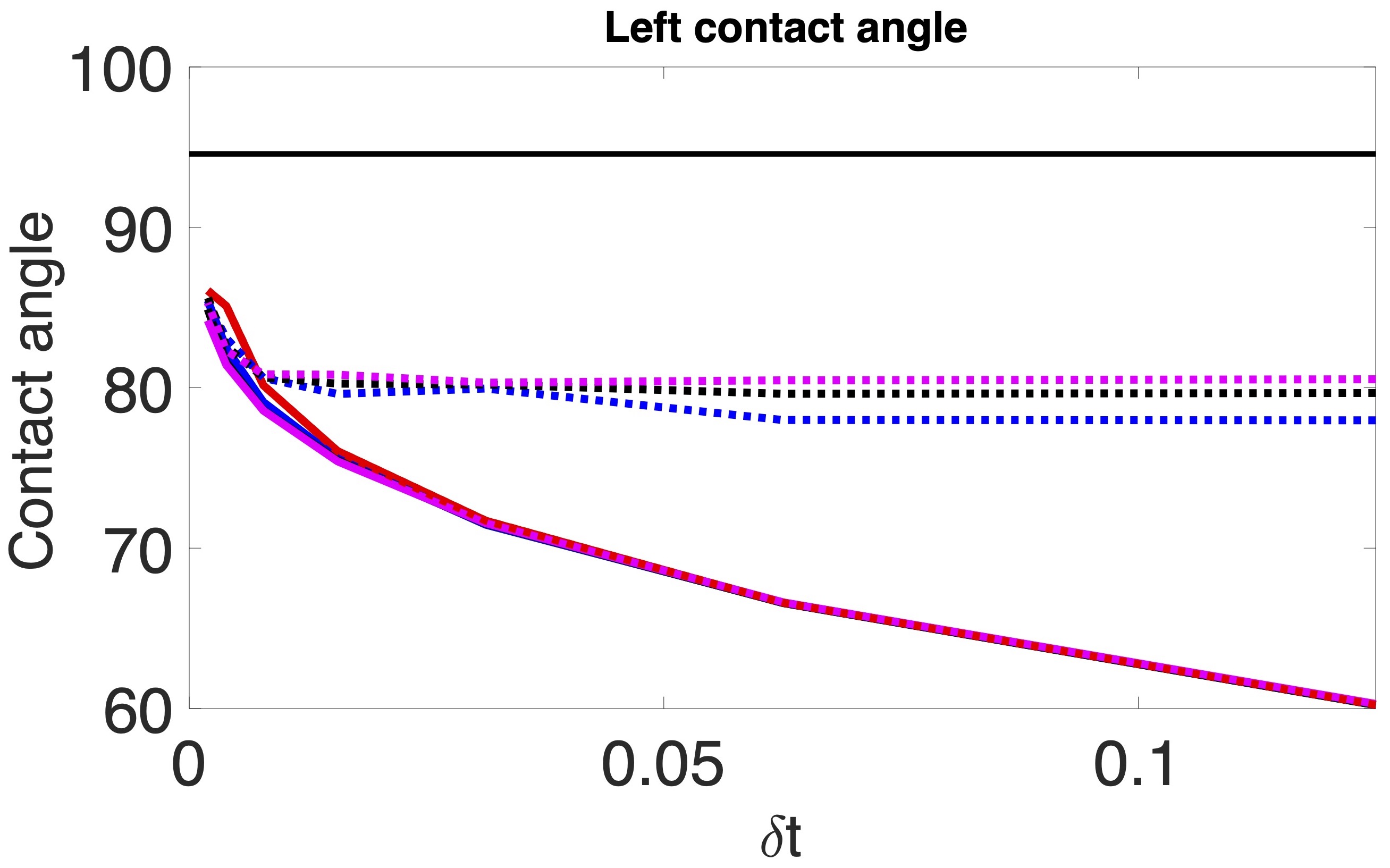}\hfill
	\includegraphics[width=.48\linewidth]{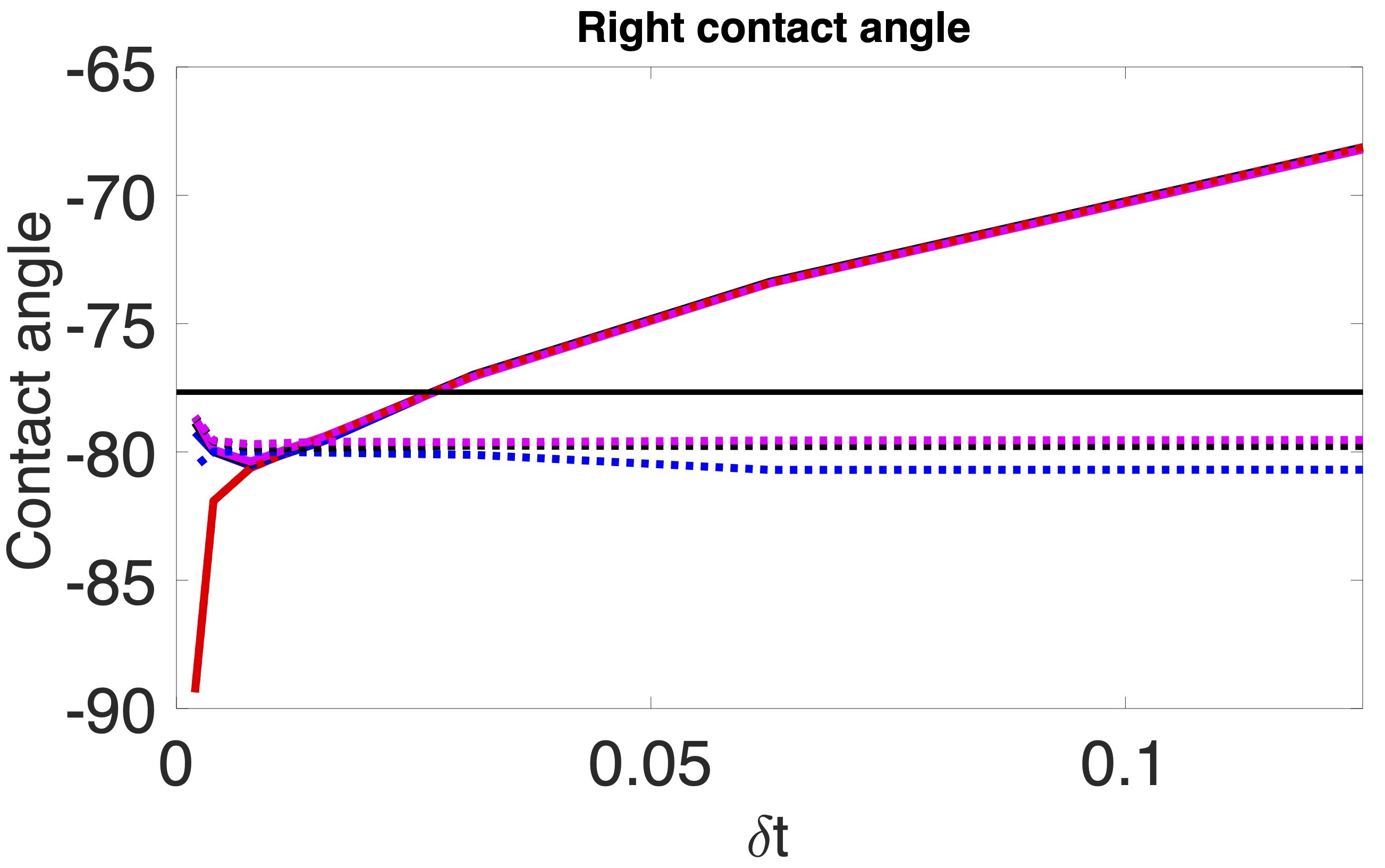}\hfill
	\includegraphics[width=.48\linewidth]{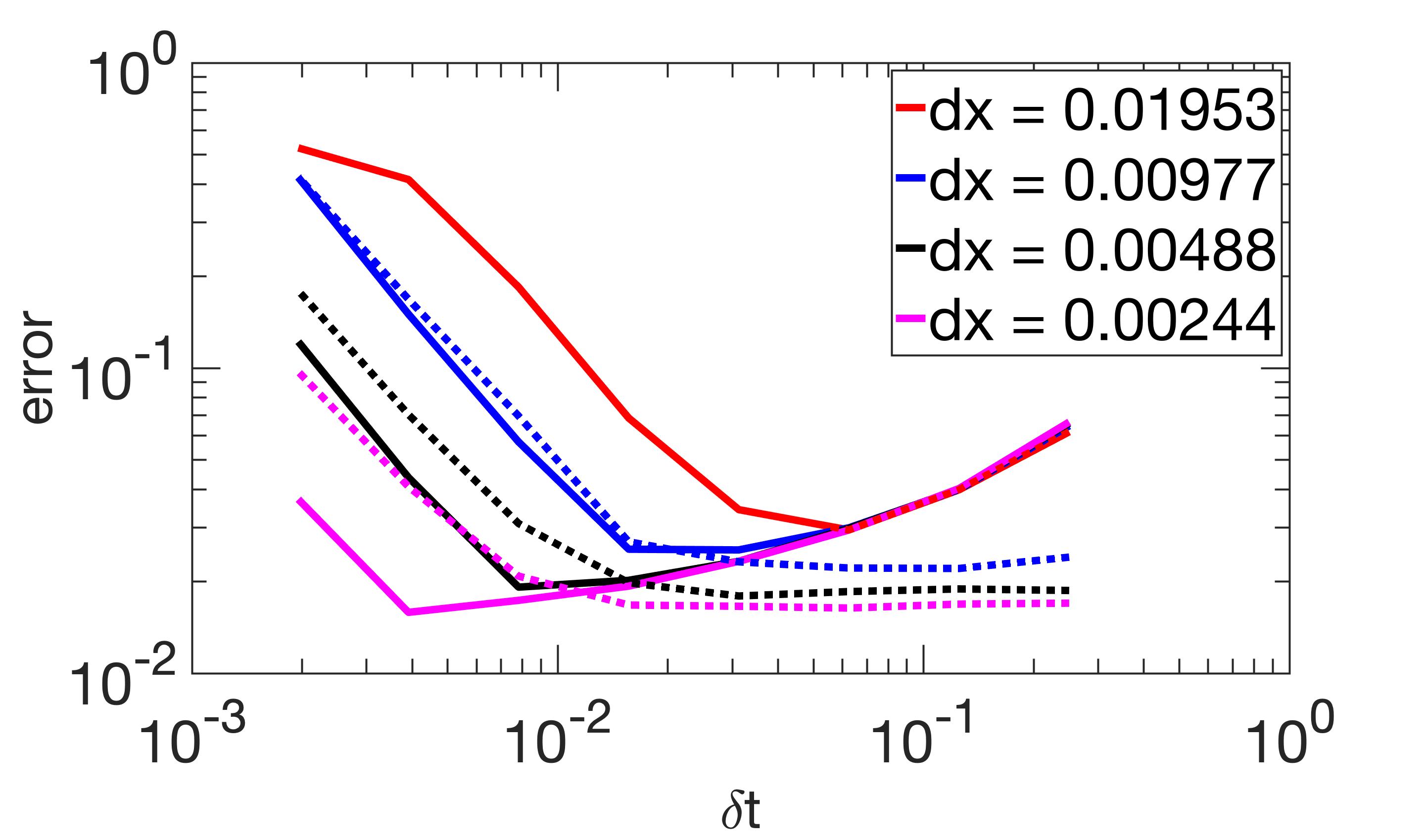}\hfill
   \includegraphics[width=.48\linewidth]{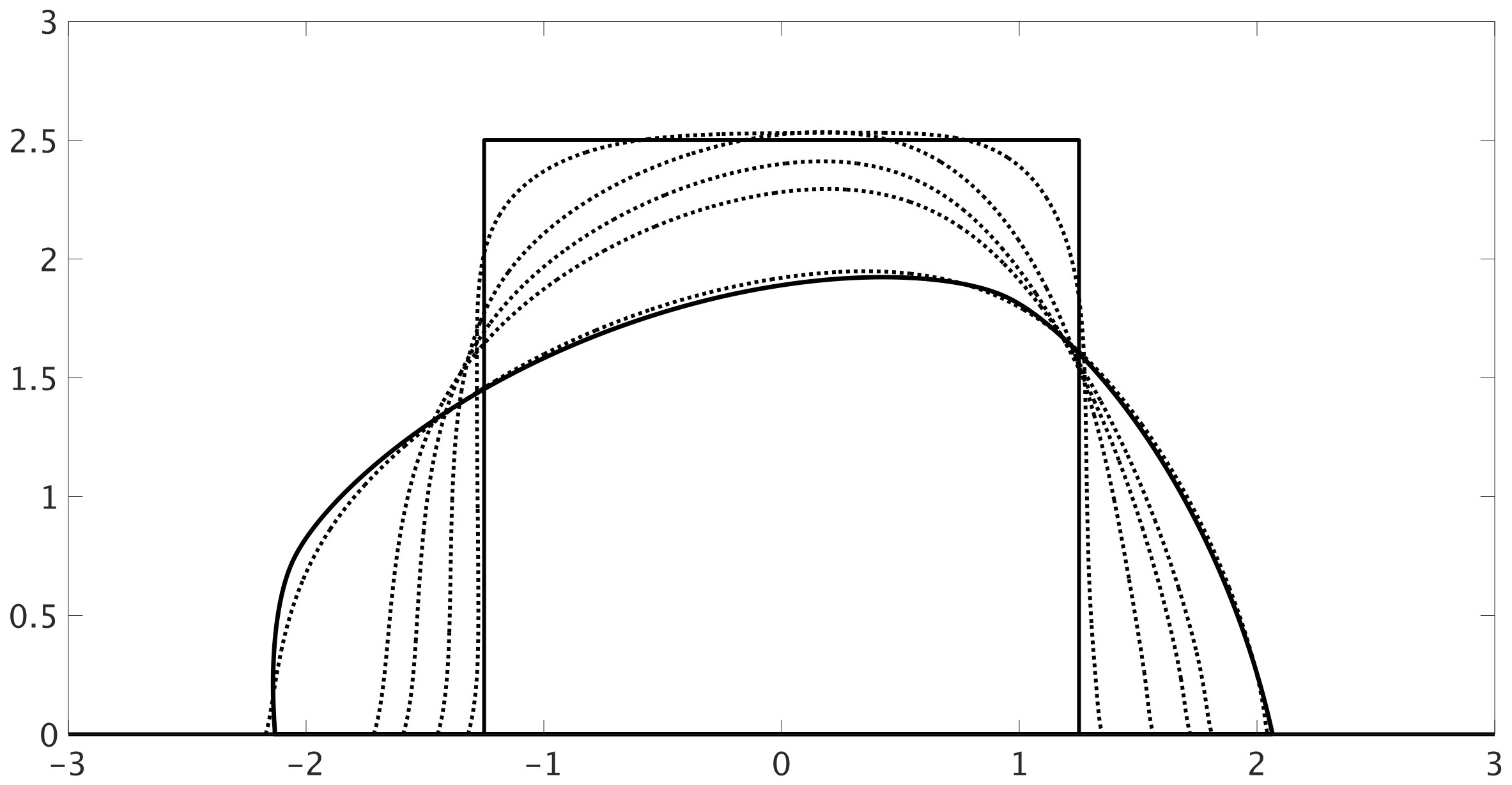}\hfill
	\caption{Analysis of Algorithm 3 modified for contact angle improvement (dotted lines) in comparison to the original Algorithm 2 (solid lines). A tilted four-fold anisotropy $\gamma (\theta) = 1 + 0.05 \cos ( 4\theta + 8)$, and  $\gamma_{SP}= 1$, $\gamma_{SV}= 1.1$ are adopted. Figures on top show the obtained contact angles, where the black lines signify the analytical contact angles $94.58^{\circ}$ (left), $-77.67^{\circ}$ (right). Figures in the second row show log-log plots of error in shape for several values of $dx$, and the computed evolution of particle at times $0, 10\delta t,  50\delta t,  100\delta t, 150\delta t \text{ and }616\delta t$, with $dx=0.0024$ and $\delta t = 0.0078$ (here solid line represents analytical stationary solution).}
	\label{fig:Tilted}}
\end{figure}
	\begin{table}[htbp] 
	\begin{center}
	\scriptsize
			\noindent
		\begin{tabular}{|c|c|c|c|c|} 
			\hline 
Time step $\delta t$& Error in area & Contact angle (left)  & Contact angle (right) & CPU time  \\ \hline
			0.2500 &   0.0163 [0.0670] &   81.15 [51.87] &  -79.03 [-60.34] & {{ 54 [18]}} min  \\ \hline
		0.1250    &	0.0161  [0.0404]	&	81.15 [60.30]  & -79.40 [-68.23]&  {{ 50 [51]}} min \\\hline
			0.0625  &  0.0158 [0.0297] &  81.23 [66.64] & -79.06 [-73.45] &  {{ 58 [69]}} min  \\ \hline
			0.03125  & 0.0157 [0.0234]  &  80.85 [71.54]  &   -79.49 [-77.08]  &  {{ 79 [102]}} min\\ \hline
				0.01562  & 0.0161 [0.0192]  &  80.87 [75.40]  &  -79.54 [-79.40] &  {{ 119 [190]}}min \\ \hline
		\end{tabular}
			\caption{Contrast between modified Algorithm 3 and original Algorithm 2 [shown in brackets]. Here $dx=0.00122$, and the correct contact angles are $94.58^{\circ}$ (left) and $-77.67^{\circ}$ (right).}
		\label{tab_errors_mod}
		\end{center}
	\end{table}
	
A comparison of outputs of Algorithm 2 and Algorithm 3 in this setup are presented in Figure \ref{fig:Tilted} and Table \ref{tab_errors_mod} for BBC kernel, which performed best in the tests of Section \ref{sec:particletestorig}.
It leads to the conclusion that, provided that initial time step $\delta t$ is chosen sufficiently large, modified Algorithm 3 yields an output which is independent of the initial $\delta t$ and corresponds to the optimal solution of the original Algorithm 2 for the given mesh size $dx$.
{{Interestingly, in spite of giving better outputs, computational time required by the modified Algorithm 3 is shorter than in the original Algorithm 2. This is caused by the termination criterion since when time step $\delta t$ is sufficiently decreased by halving, the interface stagnates leading to termination of computation. On the other hand, time step in the original algorithm is fixed and interface changes, although only slightly, over a longer time span.}}

It means that the modified algorithm is preferable since it finds the optimal solution for a given spatial mesh with a reasonably large initial time step $\delta t^0$ without negative impact on the computational cost.
On the other hand, from the error plot in Figure \ref{fig:Tilted} one observes that the modification does not improve the error of the optimal solution of Algorithm 2. This may be due to the fact that the interface away from triple point is in the stagnation mode when the neighborhood of triple point is being refined, and hence the improvement near contact point cannot be reflected in the overall shape of the particle.
This is also highlighted in the large error in the contact angle at left triple point. 
EE kernels were also tested on this setup but the time step halving modification failed to have an effect on the outcome due to the oscillations observed already in Section \ref{sec:particletestorig}.

\subsubsection{Topological changes}
The ability of handling topological changes automatically is a major advantage of level set methods.
Most of previous work on solid state dewetting problem 
uses explicit representation of the interface, i.e., front-tracking or finite elements, which requires an extra ad-hoc numerical surgery when particles merge or split.

Here we present two examples of simulations involving topology change of particles moving on substrate, namely splitting and merging.
In order to devise an experiment that leads to splitting, we consider a patterned substrate, where the effective surface tension $\gamma_S^{\text{split}} := \gamma_{SP} -\gamma_{SV}$ depends on the position on the substrate. For simplicity, we use an analogous pattern $\gamma_S^{\text{merge}}$ for the merging simulation. Specifically, we set
\begin{equation*}
    \gamma_S^{\text{split}} (x) = \left\{ \begin{array}{ll} \gamma_{S_2} = 2 \;\; & \text{for} \; x \in (-0.5,0.5) \\ \gamma_{S_1} = 0 \;\; & \text{otherwise}  \end{array} \right., \quad \gamma_S^{\text{merge}} (x) = \left\{ \begin{array}{ll} \gamma_{S_2} = -0.1 \;\; & \text{for} \; x \in (-0.5,0.5) \\ \gamma_{S_1} = 0 \;\; & \text{otherwise}  \end{array} \right.
\end{equation*}
In the thresholding algorithm, when computing the convolution $G_{\delta t} * \one_{S}$,  these values are extended to the substrate region as constants in the normal direction, as shown in Figure \ref{fig:Topological change}, that is, we replace $(\gamma_{SP} - \gamma_{SV}) G_{\delta t} * \one_{S}$ by $\gamma_{S_1} G_{\delta t} * \one_{S_1} + \gamma_{S_2} G_{\delta t} * \one_{S_2}$.
We consider two-fold anisotropy $\gamma (\theta) = 1 + 0.3 \cos(2\theta + \pi)$ 
and set up the initial condition as a rectangle for the splitting experiment and as two right-angled triangles at distance 0.4 apart for the merging experiment (see solid lines in Figure \ref{fig:Topological change}).
We used time step $\delta t = 0.0039$ and spatial grid size $dx = 0.0024$.
    \begin{figure}[h!t]
    \centering{
    	\includegraphics[width=.48\linewidth]{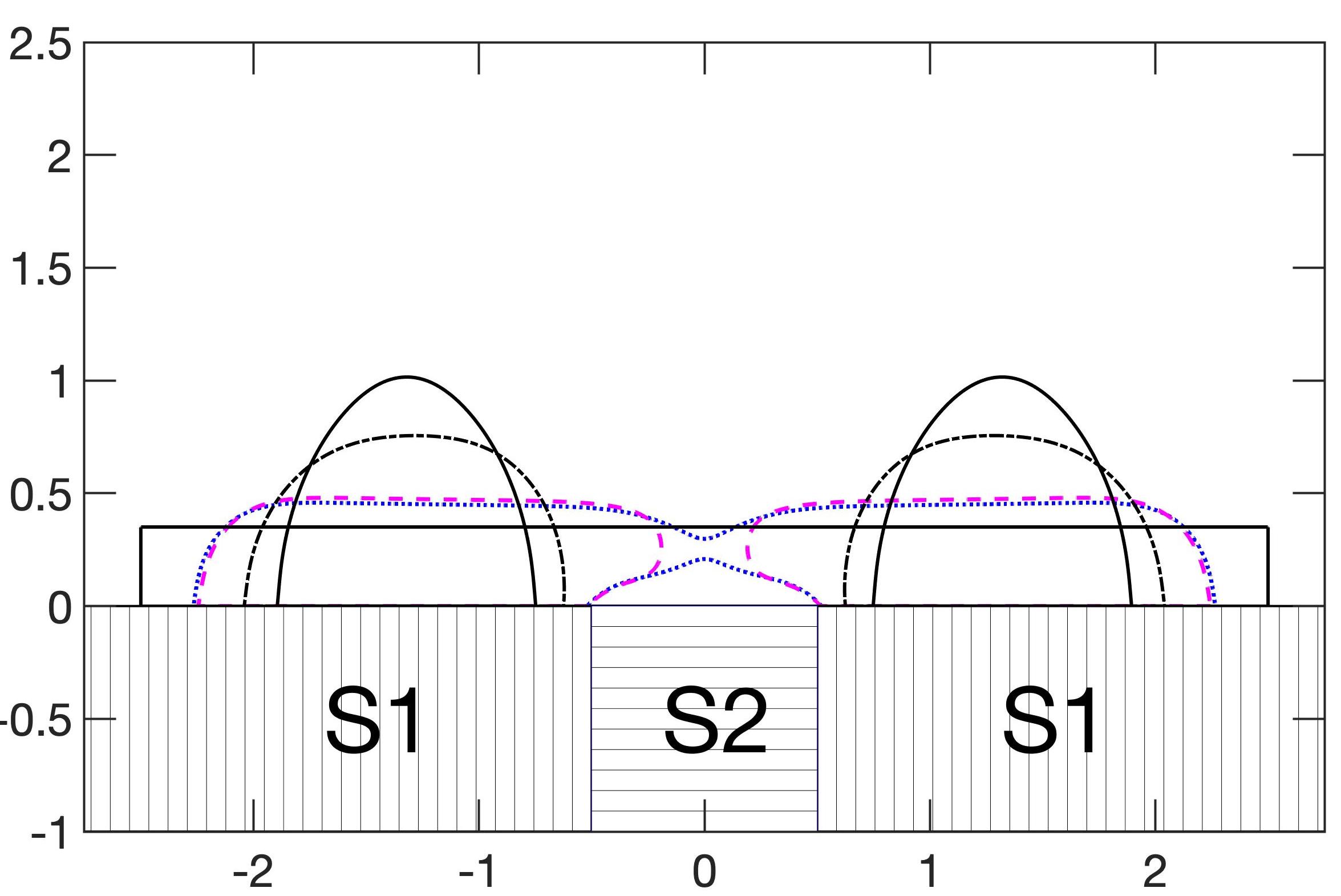}\hfill
    	\includegraphics[width=.49\linewidth]{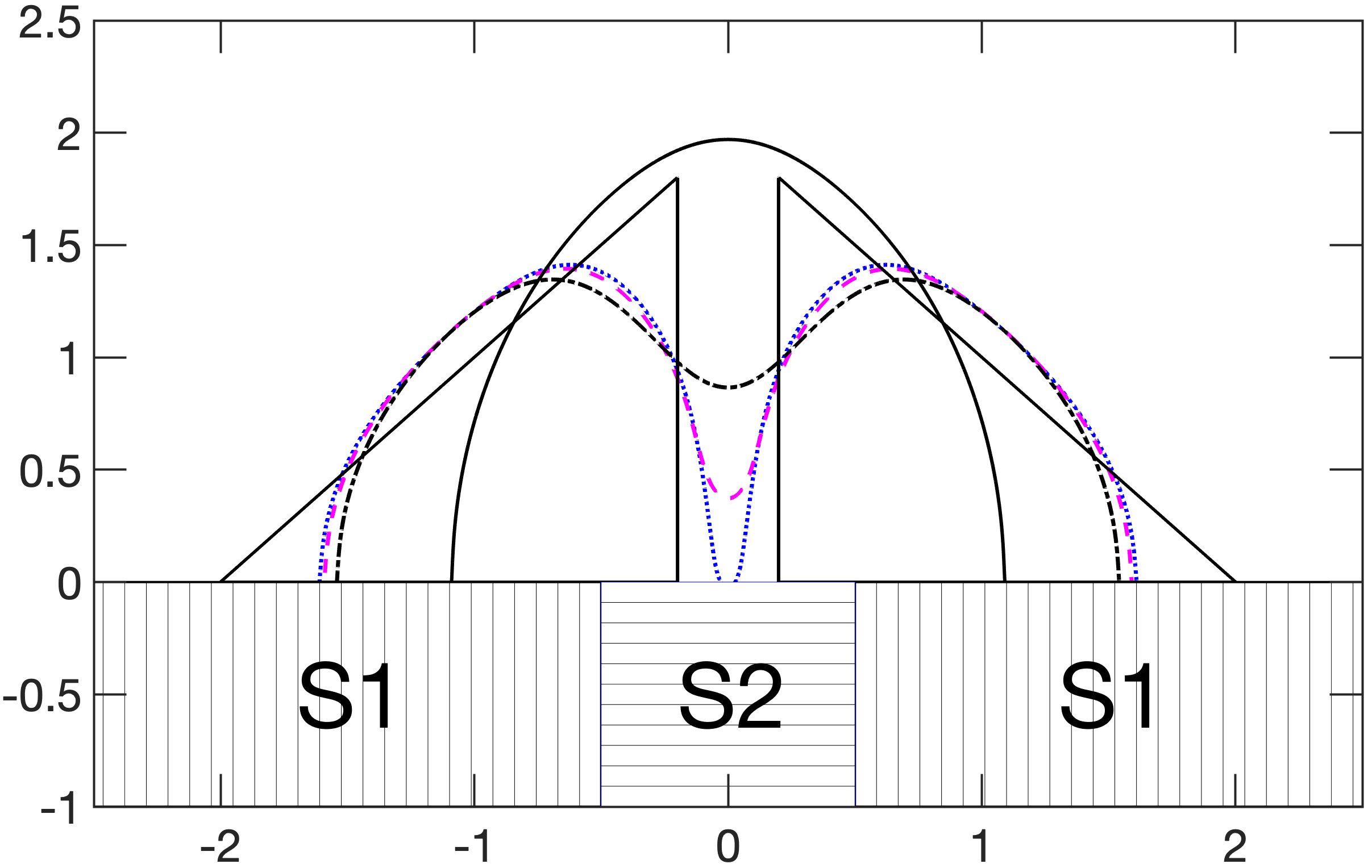}\hfill
    	\caption{Evolutions undergoing topological change for $\gamma = 1 + 0.3 \cos(2\theta + \pi)$ and $dx=0.0024$, $\delta t=  0.0039$. The initial and final shape are shown in black solid line. (Left) Splitting with $\gamma_{S_1} = 0, \gamma_{S_2} = 2$, intermediate lines showing evolution at times $25 \delta t, 28  \delta t, 70 \delta t$. (Right) Merging with $\gamma_{S_1} = 0, \gamma_{S_2} = -0.1$, intermediate lines showing evolution at times $ 49 \delta t, 55  \delta t, 75 \delta t$.}
    \label{fig:Topological change}}
    \end{figure}

As shown in Figure \ref{fig:Topological change}, in the splitting simulation particle split into two parts at 26th time step, and after that these parts were treated as two different particles with their own preserved areas.
In the merging simulation, two particles attached at 50th time step and from 51st time step on the algorithm treated them as a single particle with area equal to the sum of areas of initial particles. Note that detecting the connectivity of particles cannot be avoided if one wants to preserve the area of each particle separately.
Thanks to the symmetry of the initial configuration in our simulations, it was algorithmically easy to detect the time when topology change occurred. However, to detect topology changes happening in the evolution of a general initial configuration of particles, one needs to include a connectivity check at every time step of the algorithm.
A disadvantage of the algorithm is that due to the diffusion step, two particles sense each other even before they actually attach. This may be physically correct but the mesh size (and consequently also the time step) have to be chosen small enough to resolve or satisfactorily approximate the physically correct "sensing distance".

\section{Discussion}
We have surveyed known convolution kernels used in thresholding schemes to approximate anisotropic curvature flows and extended the scheme to realize such evolutions on obstacles.
For the basic two-phase problem, our numerical analysis confirmed the theoretically predicted first order convergence in time, and only slight differences were observed regarding the numerical performance of kernels. 
On the other hand, for the three-phase obstacle problem, although all kernels are able to approximate the correct solution to some extent, it was found that each of the kernels has certain drawbacks: 
Esedoglu-Jacobs-Zhang kernels lead to large errors, Elsey-Esedoglu kernels show spurious oscillations, while Bonnetier-Bretin-Chambolle kernel excessively smooths out sharp corners.
Our forthcoming work is going to address these issues and establish a robust scheme for multiphase anisotropic flows.\\

\noindent
{\large \bf Acknowledgement}\\
The first author acknowledges the support of the JICA's FRIENDSHIP Scholarship Program.
The research of the second author was supported by JSPS Kakenhi Grant numbers 19K03634 and 18H05481.

\vspace{2cm}
 \textbf{Name of author}: Siddharth Gavhale\\
 \textbf{E-mail address}: sb.gavhale@math.kyoto-u.ac.jp\\ 
\vspace{0.1cm}\\
\textbf{Name of author}: Karel Svadlenka\\
\textbf{E-mail address}: karel@math.kyoto-u.ac.jp\\
\vspace{0.1cm}\\
\textbf{Address of both authors}: Department of mathematics, 
            Kyoto University,\\ Kitashirakawa Oiwake-cho, Sakyo-ku,
            Kyoto, 606-8502 Japan.

\end{document}